\documentclass{amsart}
\usepackage{amsmath,amsthm,amssymb,amsfonts,amscd}
\usepackage{mathrsfs}
\usepackage{bbding}
\usepackage{graphicx,latexsym}
\usepackage{hyperref}
\usepackage{geometry}
\geometry{left=3.17cm,right=3.17cm,top=2.54cm,bottom=2.54cm}
\usepackage{color}
\usepackage{xcolor}

\hypersetup{
    colorlinks,
    linkcolor={red!50!black},
    citecolor={blue!50!black},
    urlcolor={blue!80!black}
}

\numberwithin{equation}{section}

\setcounter{footnote}{0}

\theoremstyle{plain}
\newtheorem{theorem}[equation]{Theorem}
\newtheorem*{theorem*}{Theorem}
\newtheorem{lemma}[equation]{Lemma}

\newtheorem{proposition}[equation]{Proposition}

\theoremstyle{definition}

\theoremstyle{remark}
\newtheorem{remark}[equation]{Remark}

\renewcommand{\Re}{\operatorname{Re}}
\renewcommand{\Im}{\operatorname{Im}}

\newcommand{\spec}{\operatorname{spec}}
\newcommand{\sgn}{\operatorname{sgn}}

\renewcommand{\mod}{\operatorname{mod}\,}

\newcommand{\cC}{\mathcal{C}}

\newcommand{\cE}{\mathcal{E}}

\newcommand{\cN}{\mathcal{N}}

\newcommand{\cW}{\mathcal{W}}

\newcommand*{\bbC}{\ensuremath{\mathbb{C}}}

\newcommand*{\bbR}{\ensuremath{\mathbb{R}}}

\newcommand*{\bbN}{\ensuremath{\mathbb{N}}}

\renewcommand{\d}{\delta}

\newcommand{\f}{\frac}

\newcommand{\ve}{\varepsilon}

\newcommand{\dd}{\hspace*{0.1em}\mathrm{d}}

\newcommand{\hf}{\frac{1}{2}}
\newcommand{\thf}{{\textstyle\frac{1}{2}}}

\begin{document}

\title[Mollification and non-vanishing on GL(3)]
        {Mollification and non-vanishing of
        \\ automorphic $L$-functions on GL(3)}
\author{Bingrong Huang}
\address{School of Mathematics \\ Shandong University \\ Jinan \\Shandong 250100 \\China}
\email{brhuang@mail.sdu.edu.cn}
\author{Shenhui Liu}
\address{231 W 18th Ave \\ MW 549 \\ Columbus, OH 43210 \\ USA}
\email{liu.2076@osu.edu}
\author{Zhao Xu}
\address{School of Mathematics \\ Shandong University \\ Jinan \\Shandong 250100 \\China}
\email{zxu@sdu.edu.cn}

\date{\today}

\begin{abstract}
  We prove a non-vanishing result for central values of $L$-functions on GL(3), by using the mollification method and the Kuznetsov trace formula.
\end{abstract}

\keywords{GL(3) $L$-functions, mollification,
            non-vanishing, Kuznetsov trace formula}
\subjclass[2010]{11F66, 11F67, 11F72}

\maketitle
\setcounter{tocdepth}{1}
\tableofcontents

\section{Introduction} \label{sec:intr}
There has been vast research on the non-vanishing of central $L$-values for families of automorphic forms, since the pioneering work of Duke \cite{duke1995} and Iwaniec--Sarnak \cite{iwaniec-sarnak1999dirichlet,iwaniec-sarnak2000siegel}. To get \textit{positive-proportional} non-vanishing results in families, one typically turns to the method of moments and the mollification method \`{a} la Selberg (see, for example, \cite{KowalskiMichel1999, VanderKam1999, KowalskiMichel2000, KMV2000, KMV2000b, Soundararajan2000, KMV2002, Blomer2008, Khan2010, Luo2015}, and others). In the current work we follow this approach and go beyond families of GL(2) forms (and symmetric-square lifts of GL(2) forms), and study the central $L$-values of Maass forms on GL(3) and prove a non-vanishing result of such values (Theorem \ref{main result}), which is a positive-proportional result in the sense of Remark \ref{remark_main_result}.


To state our result, we introduce a few notations and refer the reader to \S\,\ref{subsec:cusp} for certain details. Pick an orthogonal basis $\{\phi_j\}$ of Hecke--Maass forms for $\Gamma=SL(3,\mathbb{Z})$. Each $\phi_j$ has spectral parameter $\nu_{j}=\big(\nu_{j,1},\nu_{j,2},\nu_{j,3}\big)$, the Langlands parameter
$\mu_{j}=\big(\mu_{j,1},\mu_{j,2},\mu_{j,3}\big)$, and the Hecke eigenvalues $A_j(n,1)$. The main objects under investigation are the $L$-functions
\[
  L(s,\phi_j) = \sum_{n=1}^{\infty} \frac{A_{j}(1,n)}{n^{s}}
    \quad \textrm{for } \Re(s)>1.
\]
A simple observation is that there is no trivial reason for $L(\frac{1}{2},\phi_j)$ to vanish, since every $\phi_j$ is necessarily even and the sign of the functional equation of $L(s,\phi_j)$ is positive. In fact, one expects many of $L(\frac{1}{2},\phi_j)$ to be nonzero.
%
As in Blomer--Buttcane \cite{blomer2015subconvexity}, we consider the generic case in short interval.
Let $\mu_0=(\mu_{0,1},\mu_{0,2},\mu_{0,3})$ and $\nu_0=(\nu_{0,1},\nu_{0,2},\nu_{0,3})$, and satisfy
the corresponding relations \eqref{eqn:m2n} and \eqref{eqn:n2m}.
We also assume
\[
  |\mu_{0,i}|\asymp|\nu_{0,i}|\asymp T:=\|\mu_0\|, \quad
  1\leq i \leq 3.
\]
Let $M=T^\theta$ for any fixed $0<\theta<1$. Define a test function $h_{T,M}(\mu)$ (depending on $\mu_0$) for $\mu=(\mu_1,\mu_2,\mu_3)\in\mathbb{C}^3$ by
$$
     h_{T,M}(\mu) := P(\mu)^2 \bigg(\sum_{w\in\cW}\psi\bigg(\frac{w(\mu)-\mu_0}{M}\bigg)\bigg)^2,
$$
where
$$
    \psi(\mu) =\exp\left(\mu_{1}^2+\mu_{2}^2+\mu_{3}^2\right)
$$
and
$$
    P(\mu)=\prod_{0\leq n\leq A} \prod_{k=1}^{3} \frac{\left(\nu_{k}-\frac{1}{3}(1+2n)\right)\left(\nu_{k}+\frac{1}{3}(1+2n)\right)}{|\nu_{0,k}|^2}
$$
for some fixed large $A>0$. Here
\[
  \mathcal{W} := \left\{ I,\; w_2=\left(\begin{smallmatrix} 1 & & \\  & & 1 \\  &1&   \end{smallmatrix}\right),\;
  w_3=\left(\begin{smallmatrix}  &1& \\  1&& \\  &&1   \end{smallmatrix}\right),\;
  w_4=\left(\begin{smallmatrix}  &1& \\  &&1 \\  1&&   \end{smallmatrix}\right),\;
  w_5=\left(\begin{smallmatrix}  &&1 \\  1&& \\  &1&   \end{smallmatrix}\right),\;
  w_6=\left(\begin{smallmatrix}  &&1 \\  &1& \\  1&&   \end{smallmatrix}\right)
  \right\}
\]
is the Weyl group of $SL(3,\bbR)$. The function $h(\mu)$ has the localizing effect at a ball of radius $M$ about $w(\mu_0)$ for each $w\in\cW$, and other nice properties stated in \S\,\ref{subsec:cusp}. Then with the normalizing factor
$$
    \cN_j:=\|\phi_j\|^2\prod_{k=1}^{3}\cos\left(\frac{3}{2}\pi\nu_{j,k}\right),
$$
our main result is as follows.
\begin{theorem}\label{main result}
  We have
  \[
    \sum_{\substack{j\geq1 \\ L(\thf,\phi_j)\neq0}} \frac{h_{T,M}(\mu_j)}{\cN_j} \gg T^3M^2.
  \]
\end{theorem}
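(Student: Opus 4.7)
The plan is to apply the mollification method combined with Cauchy--Schwarz. Set $w_j := h_{T,M}(\mu_j)/\cN_j$, which is nonnegative (since $P(\mu)^2$ is a square, the inner Weyl-sum is squared, and $\cN_j>0$ on the tempered spectrum by the cosh identity), and introduce a Dirichlet polynomial mollifier
\[
  M(\phi_j) = \sum_{n \leq y} \frac{x_n A_j(1,n)}{\sqrt{n}}, \qquad y = T^{\Delta},
\]
with $\Delta>0$ a parameter to be optimised and coefficients $x_n$ chosen by a smoothed M\"obius-type sieve so that $M(\phi_j)$ approximates the Dirichlet coefficients of $L(s,\phi_j)^{-1/2}$ up to length $y$. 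By Cauchy--Schwarz,
\[
  \Bigl|\sum_j w_j L(\hf,\phi_j)\, M(\phi_j)\Bigr|^{2}
  \;\leq\;
  \Bigl(\sum_{L(\hf,\phi_j)\neq 0} w_j\Bigr)\cdot
  \Bigl(\sum_j w_j \bigl|L(\hf,\phi_j)\, M(\phi_j)\bigr|^{2}\Bigr).
\]
Writing $S_1$ for the sum on the left and $S_2$ for the second factor on the right, the theorem follows from a matched lower bound $S_1 \gg T^{3}M^{2}$ and upper bound $S_2 \ll T^{3}M^{2}$, since these match the expected size of the total spectral mass $\sum_j w_j$ predicted by the GL(3) Plancherel density in the localisation region of $h_{T,M}$.

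\emph{First moment.} Open $L(\hf,\phi_j)$ by an approximate functional equation, exchange the spectral sum inside, and for each fixed product $mn$ apply the GL(3) Kuznetsov trace formula of Blomer--Buttcane to $\sum_j w_j A_j(1,mn)$. The identity Weyl-element term produces a diagonal main contribution which, against the sieve coefficients $x_n$, telescopes to yield $S_1 \sim c\,T^{3}M^{2}$ for some explicit $c>0$. The contributions of the remaining Weyl elements involve matrix Kloosterman sums weighted by oscillatory integral transforms of $h_{T,M}$; the sharp Gaussian localisation of $\psi$ at scale $M$ about each $w(\mu_0)$ together with the localiser $P(\mu)^2$ provides enough stationary phase/oscillation to absorb them into an error $o(T^{3}M^{2})$, provided $\Delta$ is not too large.

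\emph{Second moment.} Apply a double approximate functional equation to $|L(\hf,\phi_j)|^{2}$ and expand
\[
  |M(\phi_j)|^{2} = \sum_{m_1,m_2 \leq y} \frac{x_{m_1}\overline{x_{m_2}}}{\sqrt{m_1 m_2}}\, A_j(1,m_1)\overline{A_j(1,m_2)}.
\]
Using Hecke multiplicativity on GL(3), rewrite the product $A_j(1,m_1)\overline{A_j(1,m_2)}$ as a linear combination of single coefficients $A_j(a,b)$, then invoke Kuznetsov a second time. The diagonal contribution, which is essentially a truncation of the Rankin--Selberg $L$-series at the edge, yields $S_2 \ll T^{3}M^{2}$ after optimising the polynomial behaviour of the $x_n$.

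\emph{Main obstacle.} The chief technical difficulty is controlling the off-diagonal contribution in the second moment. For GL(3) this comprises Kloosterman sums attached to the nontrivial long Weyl elements $w_4,w_5,w_6$ paired with multi-variable Bessel-type integral transforms of $h_{T,M}$, and the oscillatory behaviour of these transforms must be extracted by stationary phase against the tight spectral localisation of $h_{T,M}$ at scale $M$. The range of admissible mollifier lengths $y=T^{\Delta}$, and therefore the implicit constant in the theorem, is dictated entirely by the strength of these GL(3) Kloosterman estimates; the Blomer--Buttcane framework supplies enough power-saving to take some fixed $\Delta>0$, which is precisely what the statement requires.
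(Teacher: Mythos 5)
Your proposal is correct and is essentially the paper's argument: the authors define the same M\"obius-type mollifier $M_j=\sum_{\ell\le L}x_\ell A_j(1,\ell)\ell^{-1/2}$ with $L=T^{\delta}$, prove $\sum_j w_j L(\thf,\phi_j)M_j\asymp T^3M^2$ and $\sum_j w_j|L(\thf,\phi_j)M_j|^2\ll T^3M^2$ via approximate functional equations, Hecke multiplicativity, and the Blomer--Buttcane GL(3) Kuznetsov formula, and conclude by the same Cauchy--Schwarz step. Two small points of difference in detail: the paper's coefficients $x_\ell=\mu(\ell)\log(L/\ell)/\log L$ approximate $L(s,\phi_j)^{-1}$ rather than $L(s,\phi_j)^{-1/2}$, and the admissible length $\delta<11/78$ is dictated not by the Kloosterman/Weyl-element terms (which turn out to be negligible or power-saving) but by the contribution of the maximal Eisenstein series.
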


\begin{remark}\label{remark_main_result}
  By a stronger form of the GL(3) spectral large sieve inequality obtained by Young (\cite[Theorem 1.1]{young2016largesieve}), one can get the following weighted Weyl law:
  \[
    \sum_{j\geq1} \frac{h_{T,M}(\mu_j)}{\cN_j} \ll T^3M^2.
  \]
  In fact, we can replace the above ``$\ll$" by ``$\asymp$".
  Thus in this sense Theorem \ref{main result} gives a positive-proportional non-vanishing result in short interval.
\end{remark}

Next we outline the structure of the paper and give the proof of Theorem \ref{main result}. In \S\,\ref{subsec:cusp} we briefly review facts of Maass forms and their $L$-functions, as well as the main analytic tool, the GL(3) Kuznetsov trace formula (Lemma \ref{lemma: KTF}). Define the mollifier $M_j$ for $L(\hf,\phi_j)$ by
\[
  M_j := \sum_{\ell\leq L}\frac{A_j(1,\ell)}{\ell^{1/2}} x_{\ell},
\]
where
\begin{align}
  x_{\ell} := \mu(\ell)\f{1}{2\pi i}\int_{(3)}\f{(L/\ell)^s}{s^2}\frac{\dd s}{\log L}
  =  \begin{cases}
        \mu(\ell)\f{\log(L/\ell)}{\log L},\ \ &\ell\leq L,
        \\\noalign{\vskip 1mm}
        0,\ \ \ &\ell>L,
     \end{cases}
\end{align}
and $L=T^{\delta}$ for some small $\delta>0$. Then we study the mollified moments of the central $L$-values and prove the following two propositions
in \S\ref{sec:2ndmoment} and \S\ref{sec:1stmoment}, respectively.

\begin{proposition}\label{second moment}
  We have
  \[
    \sum_{j} \frac{h_{T,M}(\mu_j)}{\cN_j} |L(\thf,\phi_j) M_j|^2 \ll T^3M^2,
  \]
  provided $0<\delta<11/78$.
\end{proposition}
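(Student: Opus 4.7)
The plan is to expand the product $|L(\thf,\phi_j) M_j|^2$ via an approximate functional equation, collapse the Hecke eigenvalue products by the GL(3) multiplicativity relations, and then apply the Kuznetsov trace formula of Lemma \ref{lemma: KTF} to convert the spectral average into a diagonal main term plus Kloosterman off-diagonal contributions.

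First, I would insert the approximate functional equation, writing
\[
   |L(\thf,\phi_j)|^2 = \sum_{m,n\geq 1}\frac{A_j(1,m)\overline{A_j(1,n)}}{(mn)^{1/2}}\, V\!\left(\frac{mn}{T^3}\right) + \text{(dual term)},
\]
for a smooth cutoff $V$ of rapid decay with effective length $mn\ll T^{3+\ve}$. Multiplying by $|M_j|^2$ produces a quadruple sum in $m,n,\ell_1,\ell_2$, which I would simplify using the GL(3) Hecke relations: each product $A_j(1,m)A_j(1,\ell_1)$ is a short linear combination of eigenvalues $A_j(a,b)$ with $ab\mid m\ell_1$, and similarly for $\overline{A_j(1,n)}\,\overline{A_j(1,\ell_2)}$. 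This reduces the inner spectral data to a pair of Hecke eigenvalues $A_j(a_1,b_1)\overline{A_j(a_2,b_2)}$ supported on $a_1 b_1 a_2 b_2 \ll T^{3+\ve}L^2$.

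Next, I would interchange summation and apply Lemma \ref{lemma: KTF} to
\[
   \sum_j \frac{h_{T,M}(\mu_j)}{\cN_j}\, A_j(a_1,b_1)\overline{A_j(a_2,b_2)}.
\]
This yields a diagonal term when $(a_1,b_1)=(a_2,b_2)$, contributing $\asymp T^3M^2$ (by Remark \ref{remark_main_result}) times an arithmetic Dirichlet sum in $\ell_1,\ell_2$. The choice $x_\ell = \mu(\ell)\log(L/\ell)/\log L$ is the standard Selberg mollifier, designed precisely so that this arithmetic sum is $O(1)$ after a standard contour shift that effectively inverts the Dirichlet series of Hecke eigenvalues, yielding a diagonal contribution of the desired size $O(T^3M^2)$.

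The main obstacle is controlling the off-diagonal Kloosterman contributions attached to the five nontrivial Weyl elements $w_2,w_3,w_4,w_5,w_6$. Here I would input the sharp analysis of the integral transforms of $h_{T,M}$ developed by Blomer--Buttcane and refined in Young's large sieve work \cite{young2016largesieve}, which localize the Kloosterman moduli $(D_1,D_2)$ to the appropriate $w$-dependent ranges, combined with Weil-type bounds for the GL(3) Kloosterman sums. Summing the resulting estimates over the arithmetic parameters $a_i,b_i$ and $\ell_1,\ell_2\leq L$, each Weyl term produces an error of the shape $T^3M^2\cdot L^{\kappa_1}T^{-\kappa_2}$ for explicit positive exponents depending on the Weyl element. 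Requiring all five such bounds to be $o(T^3M^2)$ simultaneously forces the numerical cutoff $\delta<11/78$; identifying the binding Weyl element and carefully balancing the exponents across the five off-diagonal contributions is the most delicate piece of the argument.
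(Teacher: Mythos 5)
Your overall skeleton (approximate functional equation, Hecke multiplicativity, Kuznetsov, diagonal main term from the Selberg-type mollifier, truncation of the Kloosterman terms) matches the paper's route. But there is a genuine gap: you never account for the continuous spectrum. The Kuznetsov formula of Lemma \ref{lemma: KTF} reads $\cC + \cE_{\min} + \cE_{\max} = \Delta + \Sigma_4 + \Sigma_5 + \Sigma_6$, so the cuspidal sum you want is the geometric side \emph{minus} the minimal and maximal Eisenstein contributions, and these are not negligible. In the paper the maximal Eisenstein term is estimated via the Kim--Sarnak bound \eqref{eqn:Amax<<}, giving a contribution $\ll L\,T^{3/2}\cdot TM^2 (T^3L^2)^{7/64+\ve} = T^3M^2\,T^{-11/64+\ve}L^{39/32}$, and it is exactly this term that forces $L\ll T^{11/78-\ve}$, i.e.\ $\delta<11/78$ (the paper's own remark after Proposition \ref{first moment} says as much). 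You instead attribute the cutoff $11/78$ to balancing the Kloosterman terms; in the paper those are far from binding: the $w_6$ term is annihilated outright by the truncation Lemma \ref{truncate 3rd} as soon as $L\le T^{1/2-\ve}$, and the $w_4$, $w_5$ terms give $O(T^{2+\ve}M^2L)$ after Poisson summation in $m_1$ and a stationary-phase/partial-integration argument (no Weil-type bounds for GL(3) Kloosterman sums are needed). Also note that only $w_4$, $w_5$, $w_6$ produce nontrivial geometric terms in Buttcane's formula, not five Weyl elements.

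A secondary, more technical omission: the archimedean weight in the GL(3) approximate functional equation depends on the spectral parameter $\mu_j$ through the gamma factors, so you cannot simply write a $j$-independent cutoff $V(mn/T^3)$ and apply Kuznetsov. The paper handles this by expanding the gamma quotients via Stirling and absorbing the resulting weight into a modified test function $h_2(\mu)=h_{T,M}(\mu)W_{\mu,N}(m_1m_2)$, then checking that $h_2$ still satisfies the derivative and normalization properties \eqref{property1 for h}--\eqref{property2 for h} needed for the truncation lemmas. Without this step (or some substitute), the application of Lemma \ref{lemma: KTF} in your second display is not justified.
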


\begin{proposition}\label{first moment}
  We have
  \[
    \sum_{j} \frac{h_{T,M}(\mu_j)}{\cN_j} L(\thf,\phi_j) M_j \asymp T^3M^2,
  \]
  provided $0<\delta<11/78$.
\end{proposition}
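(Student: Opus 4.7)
The plan is to combine an approximate functional equation for $L(\thf,\phi_j)$ with the GL(3) Kuznetsov trace formula (Lemma \ref{lemma: KTF}) and extract the main term explicitly. Since the sign of the functional equation of $L(s,\phi_j)$ is $+1$, a standard contour shift of the completed $L$-function yields
\[
  L(\thf,\phi_j) = \sum_{n\geq 1}\frac{A_j(1,n)}{\sqrt{n}}V_1(n;\mu_j) + \sum_{n\geq 1}\frac{A_j(n,1)}{\sqrt{n}}V_2(n;\mu_j),
\]
where $V_1,V_2$ are smooth weights effectively supported on $n\lesssim T^{3/2}$ with standard derivative bounds, and $V_1(1;\mu_0)$ is a nonzero constant coming from the residue at $s=0$. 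Multiplying by $M_j = \sum_{\ell\leq L} A_j(1,\ell)x_\ell/\sqrt{\ell}$ and invoking the GL(3) Hecke relations
\[
  A_j(1,n)A_j(1,\ell) = \sum_{d\mid(n,\ell)}A_j\bigl(d,\tfrac{n\ell}{d^2}\bigr),
  \qquad
  A_j(n,1)A_j(1,\ell) = \sum_{d\mid(n,\ell)}A_j\bigl(\tfrac{n}{d},\tfrac{\ell}{d}\bigr),
\]
the spectral sum reduces to a linear combination of
\[
  S(m_1,m_2) := \sum_{j}\frac{h_{T,M}(\mu_j)}{\cN_j}A_j(m_1,m_2)
\]
with explicit elementary weights in $n,\ell,d$.

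By the GL(3) Kuznetsov trace formula (taking the second pair of indices $(n_1,n_2)=(1,1)$), each $S(m_1,m_2)$ splits as $\cW(h_{T,M})\,\delta_{(m_1,m_2)=(1,1)} + \sum_{w\in\{w_4,w_5,w_6\}}\cK_w(h_{T,M};m_1,m_2)$, where $\cW(h_{T,M})\asymp T^3M^2$ is the Plancherel spectral density, consistent with Remark \ref{remark_main_result}. Tracking $(m_1,m_2)=(1,1)$ through the two Hecke expansions singles out $n=\ell=1$ from the first product and $n=\ell=d$ from the second. The cumulative diagonal contribution is therefore
\[
  \cW(h_{T,M})\cdot\Bigl(V_1(1;\mu_0) + \sum_{\ell\leq L}\frac{V_2(\ell;\mu_0)\,x_\ell}{\ell}\Bigr)\bigl(1+o(1)\bigr).
\]
Since $V_1(1;\mu_0)$ is a positive constant and the inner Mertens-type sum is $O(1/\log L)$ — obtained by inserting the Mellin representation of $x_\ell$ and moving the contour past the simple pole of $1/\zeta(s+1)$ at $s=0$ — the total is bounded below by a positive multiple of $T^3M^2$.

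The matching upper bound $\ll T^3M^2$ is immediate from Cauchy--Schwarz applied to Proposition \ref{second moment} and Remark \ref{remark_main_result}. The principal technical obstacle is therefore to verify that the Kloosterman contributions $\cK_w(h_{T,M};m_1,m_2)$, once reassembled against the arithmetic weights from $n\lesssim T^{3/2+\varepsilon}$ and $\ell\leq T^\delta$, contribute only $o(T^3M^2)$. This is handled by the integral-kernel asymptotics for $h_{T,M}$ developed in the Blomer--Buttcane--Young framework together with Weil-type bounds on GL(3) Kloosterman sums. The off-diagonal estimates needed here are formally a special case of those appearing in Proposition \ref{second moment} (the first moment involves only one AFE of length $T^{3/2}$ rather than two), so the constraint $\delta<11/78$ is inherited without modification.
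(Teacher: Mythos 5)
Your overall strategy (approximate functional equation, mollifier, GL(3) Kuznetsov formula, main term from the delta term, upper bound by Cauchy--Schwarz against Proposition \ref{second moment} and Remark \ref{remark_main_result}) is the same as the paper's, and your evaluation of the diagonal is essentially right --- including the point that the Mertens-type sum $\sum_{\ell\le L}\mu(\ell)\log(L/\ell)/(\ell\log L)$ is $O(1/\log L)$ (note only that $1/\zeta(1+s)$ has a \emph{zero} at $s=0$, not a pole; the simple pole you pick up is that of $L^s/(s^2\zeta(1+s))$). The genuine gap sits exactly where you declare the problem finished ``without modification.''

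You use a \emph{balanced} approximate functional equation, so your dual sum carries an archimedean weight $V_2(n;\mu_j)$ containing the reflected gamma ratio $\prod_k\Gamma\bigl(\tfrac{1/2+s+\mu_{j,k}}{2}\bigr)/\Gamma\bigl(\tfrac{1/2-s-\mu_{j,k}}{2}\bigr)$. By Stirling this oscillates like $e^{i\sum_k t_k\log|t_k|}$ in the spectral parameter, so the combined test function $h_{T,M}(\mu)V_2(n;\mu)$ does \emph{not} satisfy the derivative bound \eqref{property1 for h}, $\mathscr{D}_k h\ll_k M^{-k}$, on which Lemmas \ref{truncate 1st}--\ref{truncate 3rd} rest. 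This is genuinely different from the second moment, where the absorbed weight $W_{\mu,N}$ involves ratios $\Gamma\bigl(\tfrac{s+1/2\pm\mu_k}{2}\bigr)/\Gamma\bigl(\tfrac{1/2\pm\mu_k}{2}\bigr)\approx(\mu_k/2)^{s/2}$ at \emph{nearby} arguments and is slowly varying. So the dual-sum off-diagonal terms are not ``formally a special case'' of Proposition \ref{second moment}; they would need a separate stationary-phase treatment. The paper sidesteps this entirely with an \emph{unbalanced} functional equation: taking $T_0=T^{1+\varepsilon}$ produces a single sum of length $T^{3+\varepsilon}$ whose weight $V(m,T_0)$ is independent of $\mu_j$, while the dual weight satisfies $V_j(y,\mu_j)\ll(yT)^{-B}$ for all $y\ge1$ and is discarded wholesale; Kuznetsov is then applied directly to $A_j(1,\ell)\overline{A_j(m,1)}$ (no Hecke recombination), the diagonal being the single term $\ell=m=1$. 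Either adopt that device, or supply the oscillatory-integral estimate for the dual term; as written, the inheritance of the off-diagonal bounds (and hence of the range $\delta<11/78$) from the second moment is not justified.
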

\begin{remark}
  The above results can be improved. The restriction of $\delta$ comes from the contribution of Eisenstein series, which can be refined if we use the subconvexity bounds for GL(1) and GL(2) $L$-functions or the average Lindel\"{o}f bound of the related families of $L$-functions.
\end{remark}


Through out the paper, $\varepsilon$ is an arbitrarily small positive number
and $B$ is a sufficiently large positive number which may not be the same at each occurrence.

\section{Preliminaries}
In this section we review essential facts and tools, required for later development.
\subsection{Hecke--Maass cusp forms and their $L$-functions}\label{subsec:cusp}
Let $G=GL(3,\mathbb{R})$ with maximal compact subgroup $K=O(3,\mathbb{R})$ and center $Z\cong\mathbb{R}^\times$. Let $\mathbb{H}_3=G/(K\cdot Z)$ be the generalized upper half-plane.
For $0\leq c\leq \infty$, let
\begin{equation*}
  \Lambda_{c} = \Big\{ \mu=(\mu_1,\mu_2,\mu_3)\in\mathbb{C}^3 \; \Big| \; \mu_1+\mu_2+\mu_3=0,\ |\Re\mu_k|\leq c,\ k=1,2,3 \Big\},
\end{equation*}
and
\begin{equation*}
  \Lambda_{c}' = \Big\{ \mu\in\Lambda_c \; \Big| \; \{-\mu_1,-\mu_2,-\mu_3\}=\{\overline{\mu_1},\overline{\mu_2},\overline{\mu_3}\} \Big\}.
\end{equation*}
Consider a Hecke--Maass form $\phi$ in $L^2(SL(3,\mathbb{Z})\backslash\mathbb{H})$. Here $\mu$ will be the Langlands parameters.
Define the spectral parameters
\begin{equation}\label{eqn:n2m}
  \nu_1=\frac{1}{3}(\mu_1-\mu_2),\quad
  \nu_2=\frac{1}{3}(\mu_2-\mu_3),\quad
  \nu_3=\frac{1}{3}(\mu_3-\mu_1).
\end{equation}
We have
\begin{equation}\label{eqn:m2n}
  \mu_1=2\nu_1+\nu_2, \quad
  \mu_2=\nu_2-\nu_1, \quad
  \mu_3=-\nu_1-2\nu_2.
\end{equation}
We will simultaneously use $\mu$ and $\nu$.
By unitarity and the standard Jacquet--Shalika bounds, the Langlands parameter of an
arbitrary irreducible representation $\pi\subseteq L^2(SL(3,\mathbb{Z})\backslash\mathbb{H}_3)$
is contained in $\Lambda_{1/2}'\subseteq \Lambda_{1/2}$,
and the non-exceptional parameters are in $\Lambda_{0}'=\Lambda_{0}$.

Let $\phi$ be a Hecke--Maass cusp form for $\Gamma=SL(3,\mathbb{Z})$ with Fourier coefficients
$A_{\phi}(m,n)\in\mathbb{C}$ for $(m,n)\in \mathbb{N}^2$.
The standard L-function of $\phi$ is given by
\[
  L(s,\phi):=\sum_{n=1}^{\infty} \frac{A_\phi(1,n)}{n^s} \quad \textrm{for }\Re(s)>1.
\]
For the dual form $\widetilde{\phi}$ the coefficients of $L(s,\widetilde{\phi})$ are $A_\phi(1,n)=\overline{A_\phi(n,1)}$.
The functional equation of $L(s,\phi)$ is
\begin{equation}\label{eqn:FE}
  L(s,\phi)\prod_{j=1}^{3}\Gamma_{\mathbb{R}}(s-\mu_j)
  = L(1-s,\widetilde{\phi})\prod_{j=1}^{3}\Gamma_{\mathbb{R}}(1-s+\mu_j) ,
\end{equation}
where $\Gamma_{\mathbb{R}}(s) = \pi^{-s/2}\Gamma(s/2)$.

\subsection{The minimal Eisenstein series and its Fourier coefficients}

Let
$$
U_3=\left\{\begin{pmatrix} 1&*&* \\ 0&1&* \\ 0&0&1 \end{pmatrix}\right\} \cap \Gamma.
$$
For $z\in \mathbb{H}_3$ and $\Re(\nu_1),\Re(\nu_2)$ sufficiently large,
we define the minimal Eisenstein series
\[
  E(z;\mu_1,\mu_2) := \sum_{\gamma\in U_3\backslash\Gamma} I_{\nu_1,\nu_2}(\gamma z),
\]
where
\[
  I_{\nu_1,\nu_2}(z) := y_1^{1+\nu_1+2\nu_2}y_2^{1+2\nu_1+\nu_2},
\]
and
\begin{equation}\label{z}
  z=\left\{\begin{pmatrix} y_1y_2&y_1x_2&x_3 \\ 0&y_1&x_1 \\ 0&0&1 \end{pmatrix}\right\}
  =\left\{\begin{pmatrix} 1&x_2&x_3 \\ 0&1&x_1 \\ 0&0&1 \end{pmatrix}\right\}\left\{\begin{pmatrix} y_1y_2&0&0 \\ 0&y_1&0 \\ 0&0&1 \end{pmatrix}\right\},
\end{equation}
with $y_1,y_2>0$ and $x_1,x_2,x_3\in\mathbb{R}$.
It has meromorphic continuation in $\nu_1$ and $\nu_2$.
The Fourier coefficients $A_{\mu}(m_1,m_2)$ is defined by
(see Goldfeld \cite[Theorems 10.8.6]{goldfeld2006automorphic})
\[
  A_{\mu}(m,1) = \sum_{d_1d_2d_3=m} d_1^{\mu_1}d_2^{\mu_2}d_3^{\mu_3},
\]
and the symmetry and Hecke relation (see Goldfeld \cite[Theorems 6.4.11]{goldfeld2006automorphic})
\[
  \begin{split}
     A_{\mu_1,\mu_2}(m,1) & = \overline{A_{\mu_1,\mu_2}(1,m)}, \\
     A_{\mu_1,\mu_2}(m,n) & = \sum_{d|(m,n)} \mu(d) A_{\mu_1,\mu_2}(m/d,1)A_{\mu_1,\mu_2}(1,n/d).
  \end{split}
\]
Hence we have
\begin{equation}\label{eqn:Amin<<}
  |A_{\mu}(m_1,m_2)| \ll_{\varepsilon} (m_1m_2)^{\varepsilon},
  \quad \textrm{if}\
  \mu\in (i\mathbb{R})^2.
\end{equation}
In order to state the Kuznetsov trace formula in the \S 2.5, we introduce
$$
\mathcal{N}_\mu=\frac{1}{16}\prod_{k=1}^3|\zeta(1+3\nu_{k})|^2
$$
corresponding to the minimal Eisenstein series $E(z;\mu_1,\mu_2)$,
where $\mu=(\mu_1,\mu_2,\mu_3)$.
Recall that (see \cite{titchmarsh1986theory}) we have
$$
\frac{1}{\zeta(1+it)}\ll \log (1+|t|),
$$
which implies that
\begin{align}\label{N min}
  \frac{1}{\mathcal{N}_\mu}\ll \prod_{k=1}^3\log^2(1+|\nu_k|).
\end{align}

\subsection{The maximal Eisenstein series and its Fourier coefficients}

Let
$$
P_{2,1}=\left\{\begin{pmatrix} *&*&* \\ *&*&* \\ 0&0&* \end{pmatrix}\right\} \cap \Gamma.
$$
Let $\mu\in\mathbb{C}$ have sufficiently large real part,
and let $g: SL(2,\mathbb{Z})\backslash\mathbb{H}_2\rightarrow \mathbb{C}$
be a Hecke--Maass cusp form with $\|g\|=1$,
Langlands parameter $\mu_g\in i\mathbb{R}$
and Hecke eigenvalue $\lambda_g(m)$.
The maximal Eisenstein series twisted by a Maass form $g$ is defined by
\[
  E(z;\mu;g) := \sum_{\gamma \in P_{2,1}\backslash\Gamma}\det(\gamma z)^{1/2+\mu} g(\mathfrak{m}_{P_{2,1}}(\gamma z)),
\]
where $z$ is defined as in \eqref{z}, and
\[
  \mathfrak{m}_{P_{2,1}}: \mathbb{H}_3 \rightarrow \mathbb{H}_2,\quad
  \left(\begin{matrix}
          y_1y_2 & y_1x_2 & x_3 \\
           & y_1 & x_1 \\
           &   & 1
        \end{matrix}\right) \mapsto
  \left(\begin{matrix}
          y_2 & x_2  \\
           &   1
        \end{matrix}\right)
\]
is the restriction to the upper left corner.
It has a meromorphic continuation in $\mu$.
The Fourier coefficients are determined by
\[
  B_{\mu,g}(m,1) = \sum_{d_1d_2=m}\lambda_g(d_1)d_1^{\mu}d_2^{-2\mu},
\]
and the symmetry and Hecke relation as above
(see Goldfeld \cite[Proposition 10.9.3 and Theorem 6.4.11]{goldfeld2006automorphic}).
Recall that we have the following Kim--Sarnak bound for $GL(2)$ Fourier coefficients
(see Kim \cite[Appendix 2]{kim2003functoriality})
\[
  |\lambda_g(n)|\ll_{\varepsilon} n^{7/64+\varepsilon}.
\]
Hence we have
\begin{equation}\label{eqn:Amax<<}
  |B_{\mu,g}(m_1,m_2)| \ll_\varepsilon (m_1m_2)^{7/64+\varepsilon},
  \quad \textrm{if}\
  \mu\in i\mathbb{R}.
\end{equation}
We also introduce
$$
\mathcal{N}_{\mu,g}=8L(1,\textup{Ad}^2g)|L(1+3\mu,g)|^2,
$$
where $L(s,\textup{Ad}^2g)$ is the adjoint square $L$-function of $g$,
and $L(s,g)$ is the $L$-function of $g$. 
We have the lower bounds
$$
L(1,\textup{Ad}^2g)\gg (1+|\mu_g|)^{-\varepsilon},  \quad
L(1+it,g)\gg (1+|t|+|\mu_g|)^{-\varepsilon}.
$$
These lower bounds follow from \cite{hoffstein-lockhart, hoffstein-ramakrishnan, jacquet-shalika}, and \cite{gelbart-lapid-sarnak}.
Therefore, for $\mu\in i\mathbb{R}$, it follows that
\begin{align}\label{N max}
  \frac{1}{\mathcal{N}_{\mu,g}}\ll (1+|\mu|+|\mu_g|)^{\varepsilon}.
\end{align}

\subsection{The Kloosterman sums}

For $n_1$, $n_2$, $m_1$, $m_2$, $D_1$, $D_2\in\mathbb{N}$, we need the relevant Kloosterman sums
\begin{align*}
  \tilde{S}(n_1,n_2,m_1;D_1,D_2) :=\sum_{\substack{C_1\,(\mod D_1),\, C_2\,(\mod D_2)\\ (C_1,D_1)=(C_2,D_2/D_1)=1}}
  e\left(n_2\f{\bar{C_1}C_2}{D_1}+m_1\f{\bar{C_2}}{D_2/D_1}+n_1\f{C_1}{D_1}\right)
\end{align*}
for $D_1|D_2$, and
\begin{align*}
  &S(n_1,m_2,m_1,n_2;D_1,D_2)
  \\
  &:=\sum_{\substack{B_1,C_1(\mod D_1)\\ B_2,C_2(\mod D_2)\\ D_1C_2+B_1B_2+D_2C_1\equiv0\,(\mod D_1D_2)\\ (B_j,C_j,D_j)=1}}e\left(\f{n_1B_1+m_1(Y_1D_2-Z_1B_2)}{D_1}+\f{m_2B_2+n_2(Y_2D_1-Z_2B_1)}{D_2}\right),
\end{align*}
where $Y_jB_j+Z_jC_j\equiv1\,(\mod D_j)$ for $j=1,2$.

\subsection{The Kuznetsov trace formula}

We first introduce some notation.
Define the spectral measure on the hyperplane $\mu_1+\mu_2+\mu_3=0$ by
\[
  \dd_{\spec}\mu=\spec(\mu)\dd\mu,
\]
where
$$
  \spec(\mu):= \prod_{k=1}^{3}\left(3\nu_k\tan\left(\frac{3\pi}{2}\nu_k\right)\right)\quad\mbox{and}\quad \dd\mu=\dd\mu_1\dd\mu_2=\dd\mu_2\dd\mu_3=\dd\mu_1\dd\mu_3.
$$
Following \cite[Theorems 2 \& 3]{buttcane2016spectral},
we define the following integral kernels in terms of
Mellin--Barnes representations.
For $s \in \bbC$, $\mu \in \Lambda_{\infty}$ define the meromorphic function
$$
  \tilde{G}^{\pm}(s, \mu) := \frac{  \pi^{-3s}}{12288 \pi^{7/2}}\Biggl(\prod_{k=1}^3\frac{\Gamma(\frac{1}{2}(s-\mu_k))}{\Gamma(\frac{1}{2}(1-s+\mu_k))} \pm i   \prod_{k=1}^3\frac{\Gamma(\frac{1}{2}(1+s-\mu_k))}{\Gamma(\frac{1}{2}(2-s+\mu_k))} \Biggr),
$$
and for $s = (s_1, s_2) \in \bbC^2$, $\mu \in \Lambda_{\infty}$ define the meromorphic function
$$
  G(s, \mu) :=  \frac{1}{\Gamma(s_1 + s_2)} \prod_{k=1}^3 \Gamma(s_1 - \mu_k) \Gamma(s_2 + \mu_k).
$$
The latter is essentially the double Mellin transform
of the GL(3) Whittaker function.
We also define the following trigonometric functions
\begin{displaymath}
  \begin{split}
    & S^{++}(s, \mu) := \frac{1}{24 \pi^2} \prod_{k=1}^3 \cos\left(\frac{3}{2} \pi \nu_k\right),\\
    &  S^{+-}(s, \mu) :=  -\frac{1}{32 \pi^2} \frac{\cos(\frac{3}{2} \pi \nu_2)\sin(\pi(s_1 - \mu_1))\sin(\pi(s_2 + \mu_2))\sin(\pi(s_2 + \mu_3))}{\sin(\frac{3}{2} \pi \nu_1)\sin(\frac{3}{2} \pi \nu_3) \sin(\pi(s_1+s_2))}, \\
    & S^{-+}(s, \mu) :=-\frac{1}{32 \pi^2}  \frac{\cos(\frac{3}{2} \pi \nu_1)\sin(\pi(s_1 - \mu_1))\sin(\pi(s_1 - \mu_2))\sin(\pi(s_2 + \mu_3))}{\sin(\frac{3}{2} \pi \nu_2)\sin(\frac{3}{2} \pi \nu_3)\sin(\pi(s_1+s_2))}, \\
    & S^{--}(s, \mu) := \frac{1}{32 \pi^2}  \frac{\cos(\frac{3}{2} \pi \nu_3) \sin(\pi(s_1 - \mu_2))\sin(\pi(s_2 + \mu_2))}{\sin(\frac{3}{2} \pi \nu_2)\sin(\frac{3}{2} \pi \nu_1)}.
  \end{split}
\end{displaymath}
For  $y \in \Bbb{R} \setminus \{0\}$ with
$\sgn(y) = \epsilon$, 
let
$$
  K_{w_4}(y;\mu) :=  \int_{-i\infty}^{i\infty} | y|^{-s} \tilde{G}^{\epsilon}(s, \mu) \frac{\dd s}{2\pi i}.
$$
For $y = (y_1, y_2) \in (\bbR\setminus \{0\})^2$ with
$\sgn(y_1) = \epsilon_1$, $\sgn(y_2) = \epsilon_2$, let
\begin{displaymath}
  \begin{split}
     K^{\epsilon_1, \epsilon_2}_{w_6}(y; \mu)
     :=  & \int_{-i\infty}^{i\infty} \int_{-i\infty}^{i\infty}  |4\pi^2 y_1|^{-s_1} |4\pi^2 y_2|^{-s_2}  G(s, \mu) S^{\epsilon_1, \epsilon_2}(s, \mu)\frac{\dd s_1 \dd s_2}{(2\pi i)^2}.
  \end{split}
\end{displaymath}

We can now state the Kuznetsov trace formula in the version of
Buttcane \cite[Theorems 2, 3, 4]{buttcane2016spectral}.

\begin{lemma}\label{lemma: KTF}
  Let $n_1$, $n_2$, $m_1$, $m_2 \in \bbN$ and
  let $h$ be a function  that is holomorphic on
  $\Lambda_{1/2 + \delta}$ for some $\delta > 0$,
  symmetric under the Weyl group $\cW$, of rapid decay when
  $|\Im \mu_j| \rightarrow \infty$,  and satisfies
  \begin{equation}\label{eqn: zeros}
    h(3\nu_j \pm 1)  = 0, \quad j = 1, 2, 3.
  \end{equation}
  Then we have
  \begin{displaymath}
    \begin{split}
      &  \cC + \cE_{\min} + \cE_{\max}
       = \Delta + \Sigma_4 + \Sigma_5 + \Sigma_6,
    \end{split}
  \end{displaymath}
  where
  \begin{equation*}
    \begin{split}
       \cC & := \sum_{j} \frac{h(\mu_{j})}{\cN_j} \overline{A_{j}(m_1, m_2)} A_{j}(n_1, n_2), \\
        \cE_{\min} & := \frac{1}{24(2\pi i)^2} \iint_{\Re(\mu)=0} \frac{h(\mu)}{\cN_{\mu}}
                        \overline{A_{\mu}(m_1, m_2)} A_{\mu}(n_1, n_2) \dd\mu_1 \dd\mu_2,\\
       \cE_{\max} & :=  \sum_{g} \frac{1}{2\pi i} \int_{\Re(\mu)=0}
            \frac{h(\mu+\mu_g,\mu-\mu_g,-2\mu)}{\cN_{\mu,g}}
             \overline{B_{\mu,g}(m_1, m_2)} B_{\mu,g}(n_1, n_2) \dd\mu,\\
    \end{split}
  \end{equation*}
  and
  \begin{equation*}
    \begin{split}
      \Delta& := \delta_{n_1, m_1} \delta_{n_2, m_2}  \frac{1}{192\pi^5} \int_{\Re \mu = 0} h(\mu) \dd_{\spec}\mu,\\
       \Sigma_{4}& := \sum_{\epsilon  = \pm 1} \sum_{\substack{D_2 \mid D_1\\  m_2 D_1= n_1 D_2^2}}\frac{ \tilde{S}(-\epsilon n_2, m_2, m_1; D_2, D_1)}{D_1D_2} \Phi_{w_4}\!\left(  \frac{\epsilon m_1m_2n_2}{D_1 D_2} \right),  \\
       \Sigma_{5} &:= \sum_{\epsilon  = \pm 1} \sum_{\substack{     D_1 \mid D_2\\ m_1 D_2 = n_2 D_1^2}} \frac{ \tilde{S}(\epsilon n_1, m_1, m_2; D_1, D_2) }{D_1D_2}\Phi_{w_5}\!\left( \frac{\epsilon n_1m_1m_2}{D_1 D_2}\right),\\
       \Sigma_6 &:= \sum_{\epsilon_1, \epsilon_2 = \pm 1} \sum_{D_1,  D_2  } \frac{S(\epsilon_2 n_2, \epsilon_1 n_1, m_1, m_2; D_1, D_2)}{D_1D_2} \Phi_{w_6}\!  \left( - \frac{\epsilon_2 m_1n_2D_2}{D_1^2}, - \frac{\epsilon_1 m_2n_1D_1}{ D_2^2}\right),
    \end{split}
  \end{equation*}
  with
\begin{equation}\label{defPhi}
    \begin{split}
      & \Phi_{w_4}(y) :=  \int_{\Re \mu = 0} h(\mu) K_{w_4}(y; \mu ) \dd_{\spec}\mu,\\
      & \Phi_{w_5}(y) := \int_{\Re \mu = 0} h(\mu) K_{w_4}(-y; -\mu ) \dd_{\spec}\mu,\\
      & \Phi_{w_6}(y_1, y_2) := \int_{\Re \mu = 0} h(\mu) K^{\sgn(y_1), \sgn(y_2)}_{w_6}((y_1, y_2) ;  \mu ) \dd_{\spec}\mu.
    \end{split}
\end{equation}
\end{lemma}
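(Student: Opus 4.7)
The plan is to apply an approximate functional equation to $L(\thf,\phi_j)$, use GL(3) Hecke multiplicativity to linearise the product of Hecke eigenvalues, and invoke the Kuznetsov trace formula of Lemma~\ref{lemma: KTF}; the main term will come from the diagonal $\Delta$, and the constraint $\delta < 11/78$ will be forced by the error bounds.

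\textbf{Setup via the approximate functional equation.} From the functional equation \eqref{eqn:FE}, inserting an auxiliary damping factor such as $e^{s^2}/s$ and shifting the contour from $\Re s = 3$ to $\Re s = -3$ produces
\[
L(\thf,\phi_j) \;=\; \sum_{n\geq1}\frac{A_j(1,n)}{\sqrt{n}}\,V_+\!\left(\tfrac{n}{T^{3/2}};\mu_j\right) + \sum_{n\geq1}\frac{\overline{A_j(1,n)}}{\sqrt{n}}\,V_-\!\left(\tfrac{n}{T^{3/2}};\mu_j\right),
\]
with smooth weights $V_\pm$ that are essentially constant for $n\leq T^{3/2-\ve}$ and of rapid decay beyond $T^{3/2+\ve}$. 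Multiplying by $M_j$ reduces the proposition to evaluating
\[
\sum_j \frac{h_{T,M}(\mu_j)}{\cN_j}\,A_j(1,n)\,A_j(1,\ell) \qquad (1\leq \ell\leq L,\; n\ll T^{3/2+\ve})
\]
and its complex-conjugate analogue. The key input is the GL(3) Hecke relation
\[
A_j(1,n)\,A_j(1,\ell) \;=\; \sum_{d\mid(n,\ell)} A_j\!\left(\tfrac{n}{d},\tfrac{\ell}{d}\right),
\]
which, together with $A_j(1,1)=1$, matches each inner sum to the cuspidal side $\cC$ of Lemma~\ref{lemma: KTF} with $(m_1,m_2)=(1,1)$ and $(n_1,n_2)=(n/d,\ell/d)$, giving $\cC = \Delta + \Sigma_4+\Sigma_5+\Sigma_6 - \cE_{\min}-\cE_{\max}$.

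\textbf{Main term from the diagonal.} The factors $\delta_{n_1,m_1}\delta_{n_2,m_2}$ in $\Delta$ force $n/d=\ell/d=1$, i.e.\ $n=\ell=d$, so only the pairs $n=\ell$ survive. Since $L = T^\delta \ll T^{3/2}$, the weights $V_\pm(\ell/T^{3/2};\mu_j)$ are essentially their value at $0$, whose sum equals $1$, and the surviving contribution is
\[
\sum_{\ell\leq L}\frac{x_\ell}{\ell}\cdot\frac{1}{192\pi^5}\int_{\Re\mu=0} h_{T,M}(\mu)\,\dd_{\spec}\mu \cdot (1+o(1)).
\]
A Perron/Mellin argument applied to $1/(s^2\zeta(1+s))$, combined with the residue at $s=0$, gives $\sum_{\ell\leq L} x_\ell/\ell = 1 + o(1)$, while the spectral integral is $\asymp T^3 M^2$ because $h_{T,M}$ is a positive Gaussian bump of radius $M$ at each of the six Weyl translates of $\mu_0$, and the Plancherel density $\spec(\mu)$ is of order $T^3$ in the generic regime $|\mu_{j,k}|\asymp T$. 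Thus the diagonal contributes a main term of size $\asymp T^3 M^2$.

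\textbf{Bounding the remainder.} It remains to show that $\Sigma_4,\Sigma_5,\Sigma_6,\cE_{\min},\cE_{\max}$ are all $o(T^3 M^2)$. The Kloosterman pieces are controlled by combining Weil-type bounds for the generalised Kloosterman sums $\tilde S$ and $S$ with the stationary-phase asymptotics of the kernels $K_{w_4}$ and $K^{\pm,\pm}_{w_6}$ on the support of $h_{T,M}$, which localises $\mu$ at scale $T$ of width $M$ and forces decay once the moduli $D_1,D_2$ exceed the conductor thresholds set by $n,\ell$. For the minimal Eisenstein contribution, the bounds \eqref{eqn:Amin<<} and \eqref{N min} suffice. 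The delicate contribution is $\cE_{\max}$, an integral over GL(2) Hecke--Maass forms $g$ for which one must use the Kim--Sarnak bound \eqref{eqn:Amax<<} and the lower bound \eqref{N max}; balancing the $(n\ell)^{7/64+\ve}$ coefficient loss against the $T$-aspect gain from the $\mu$-integral, against the length cap $n\leq T^{3/2+\ve}$, $\ell\leq T^\delta$, produces precisely the admissibility condition $\delta < 11/78$. This is the main technical obstacle; once it is in place, the asymptotic $\asymp T^3 M^2$ follows from combining the diagonal main term with the error estimates.
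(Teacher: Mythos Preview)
Your proposal does not address the stated lemma. Lemma~\ref{lemma: KTF} is the GL(3) Kuznetsov trace formula itself, which the paper does not prove at all but quotes from Buttcane \cite[Theorems 2--4]{buttcane2016spectral}. What you have sketched is instead an argument for Proposition~\ref{first moment}, the mollified first moment. Assuming that is what you intended, let me compare your outline to the paper's \S\ref{sec:1stmoment}. The paper takes a simpler route in two respects. First, it uses an \emph{unbalanced} approximate functional equation with parameter $T_0=T^{1+\varepsilon}$, so that one sum has length $T_0^{3}$ and weight $V(m,T_0)$ independent of $\mu_j$, while the dual sum is $O(T^{-B})$; this sidesteps the need to absorb $\mu_j$-dependent weights $V_\pm$ into the Kuznetsov test function. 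Second, the paper applies Kuznetsov directly to $A_j(1,\ell)\overline{A_j(m,1)}$ with $(n_1,n_2)=(1,\ell)$, $(m_1,m_2)=(m,1)$, so no Hecke linearisation is needed; the diagonal forces $\ell=m=1$ and, since $x_1=1$ and $V(1,T_0)=1+O(T^{-B})$, the main term is immediate.

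Your balanced-AFE route has a genuine error: the Hecke identity you invoke,
\[
A_j(1,n)\,A_j(1,\ell)\;=\;\sum_{d\mid(n,\ell)}A_j\!\left(\tfrac{n}{d},\tfrac{\ell}{d}\right),
\]
is false on GL(3). Take $\ell=1$, $n=p$: the left side is $A_j(1,p)$ while the right side is $A_j(p,1)$, and these disagree unless $\phi_j$ is self-dual. The correct identity of this shape is $A_j(n,1)A_j(1,\ell)=\sum_{d\mid(n,\ell)}A_j(n/d,\ell/d)$, which applies only to the \emph{dual} piece $\overline{A_j(1,n)}A_j(1,\ell)$ of your AFE, not to the direct piece. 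Hence your diagonal claim that ``$n=\ell$ survives, giving $\sum_{\ell}x_\ell/\ell$'' is unjustified for the $V_+$ term. The remedy is to drop the Hecke step altogether: write $A_j(1,n)=\overline{A_j(n,1)}$ and apply Lemma~\ref{lemma: KTF} with $(m_1,m_2)=(n,1)$, $(n_1,n_2)=(1,\ell)$, which is precisely the paper's move. Your identification of the maximal Eisenstein contribution as the source of the constraint $\delta<11/78$ does match the paper.
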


In the first moment, we will use $h(\mu)=h_{T,M}(\mu)$ to be the test function which is same as Blomer--Buttcane \cite{blomer2015subconvexity}, and in the second moment, we will use
$h_2(\mu)=h_{T,M}(\mu)W_{\mu,N}(m_1m_2)$ (see \eqref{test h2}) to be the test function.
The function $h(\mu)$ localizes at a ball of radius $M$ about $w(\mu_0)$ for each $w\in\mathcal{W}$.
We have
\begin{equation}\label{property1 for h}
\mathscr{D}_k h_{T,M}(\mu)\ll_k M^{-k},
\end{equation}
for any differential operator $\mathscr{D}_k$ of order $k$,
which we use frequently when we integrate by parts,
and sufficiently many differentiations can save arbitrarily many powers of $T$.
Moreover, by trivial estimate, we have
\begin{equation}\label{property2 for h}
  \int_{\Re \mu=0}h_{T,M}(\mu)\dd_{\spec}\mu\asymp T^3M^2.
\end{equation}

\subsection{The weight functions}

For the weight functions, we will need the following results
in Blomer--Buttcane \cite[Lemma 1, Lemma 8, and Lemma 9]{blomer2015subconvexity}.

\begin{lemma}\label{truncate 1st}
  For some large enough constant $B>0$, we have
  \[
    \Phi_{w_4}(y) \ll |y|^{1/10}T^B,\quad
    \Phi_{w_5}(y) \ll |y|^{1/10}T^B,\quad
    \Phi_{w_6}(y_1,y_2) \ll |y_1y_2|^{3/5}T^B.
  \]
\end{lemma}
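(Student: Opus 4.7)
The kernels $K_{w_4}$ and $K^{\epsilon_1,\epsilon_2}_{w_6}$ are given by Mellin--Barnes integrals in the $y$-variable, so the natural plan is to shift those contours leftward to produce the desired powers of $|y|$, and then bound the resulting integrals crudely using Stirling's formula together with the trivial bound on $h_{T,M}$. Since we only need the polynomial-in-$T$ exponent $B$ to be ``some large constant,'' we can be quite wasteful in the $T$-dependence.

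\textbf{Bound for $\Phi_{w_4}$ and $\Phi_{w_5}$.} In the integral defining $K_{w_4}(y;\mu) = \int_{(0)} |y|^{-s}\tilde G^{\sgn(y)}(s,\mu)\,\frac{\dd s}{2\pi i}$, I would shift the contour from $\Re s = 0$ to $\Re s = -\tfrac{1}{10}$. For $\mu$ purely imaginary the poles of $\Gamma(\tfrac12(s-\mu_k))$ sit on the imaginary axis itself, and those of $\Gamma(\tfrac12(1+s-\mu_k))$ lie on $\Re s = -1$, so the shift picks up no residues. The factor $|y|^{-s}$ contributes exactly $|y|^{1/10}$. On the new contour, writing $s = -\tfrac{1}{10} + i\tau$ and $\mu_k = i\sigma_k$, the imaginary parts of the gamma arguments in the numerator and denominator are opposite, so the exponential $e^{-\pi|t|/2}$ factors cancel pairwise and Stirling gives
\[
  \left|\frac{\Gamma(\tfrac12(s-\mu_k))}{\Gamma(\tfrac12(1-s+\mu_k))}\right|\ll (1+|\tau-\sigma_k|)^{-3/5}.
\]
Taking the product over $k=1,2,3$, integrating in $\tau$, and then against $h_{T,M}(\mu)\,\dd_{\spec}\mu$ (using the trivial bound $h_{T,M}\ll T^{B_0}$ and $|\spec(\mu)|\ll T^{B_1}$ on the support of $h_{T,M}$) yields $\Phi_{w_4}(y)\ll |y|^{1/10}T^B$. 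The bound for $\Phi_{w_5}$ is identical since its defining integral differs only by $(-y,-\mu)$.

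\textbf{Bound for $\Phi_{w_6}$.} For the two-variable kernel I would shift both contours from $\Re s_j = 0$ to $\Re s_j = -\tfrac{3}{5}$, producing the factor $|y_1y_2|^{3/5}$. The poles of $G(s,\mu)$ coming from $\Gamma(s_1-\mu_k)$ and $\Gamma(s_2+\mu_k)$ lie on the imaginary axis, so they are not crossed; the factor $1/\Gamma(s_1+s_2)$ is entire and in fact helps by supplying additional decay via Stirling. On the shifted contours, Stirling for $G(s,\mu)$ controls the gamma ratios by a product of $(1+|\Im s_j \pm \sigma_k|)^{-3/5}$-type factors, and the sine/cosine factors in the numerator of $S^{\epsilon_1,\epsilon_2}$ grow at most as an exponential in $|\Im s_j|$ that is absorbed by the matching Stirling exponential from $G$. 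Again integrating in $(s_1,s_2)$ and then against the spectral measure gives the claimed bound $|y_1y_2|^{3/5}T^B$.

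\textbf{Main obstacle.} The delicate point is handling the trigonometric factor $S^{\epsilon_1,\epsilon_2}(s,\mu)$: its denominator contains $\sin(\pi(s_1+s_2))$ (for the $(+,-)$ and $(-,+)$ cases) and $\sin(\tfrac{3}{2}\pi\nu_k)$, which vanish on arithmetic progressions. I would shift the contours to $\Re s_j = -\tfrac{3}{5} + \eta_j$ for small irrational perturbations $\eta_j$ (or indent around the offending points) to keep the denominators bounded away from zero, while using the vanishing condition \eqref{eqn: zeros} on $h$ to absorb any residues at the troublesome spectral points. Once the contour geometry is set, the rest of the argument is bookkeeping with Stirling, which is routine.
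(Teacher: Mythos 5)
Your overall strategy --- shift the Mellin--Barnes contours to $\Re s=-\tfrac{1}{10}$ (resp.\ $\Re s_1=\Re s_2=-\tfrac{3}{5}$) to extract the power of $|y|$, then control the gamma ratios by Stirling and integrate trivially against $h_{T,M}\,\dd_{\spec}\mu$ --- is exactly the argument behind this lemma. The paper does not reprove it but quotes it from Blomer--Buttcane (their Lemma~1), whose proof is essentially the short version of what you wrote; and your Stirling exponent $(1+|\tau-\sigma_k|)^{-3/5}$ for each ratio $\Gamma(\tfrac12(s-\mu_k))/\Gamma(\tfrac12(1-s+\mu_k))$ on $\Re s=-\tfrac{1}{10}$ is correct (the exponentials cancel and the total decay $3\cdot\tfrac35>1$ makes the $\tau$-integral converge).

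The one place where your justification is wrong as stated is the pole bookkeeping at $\Re s=0$. For $\mu\in(i\bbR)^3$ the poles of $\prod_k\Gamma(\tfrac12(s-\mu_k))$ nearest the contour are at $s=\mu_k$, i.e.\ \emph{on} the line from which you start; saying that they ``sit on the imaginary axis itself'' and hence ``the shift picks up no residues'' is backwards --- poles on or immediately to the left of the initial contour are precisely the ones at risk when you move to $\Re s=-\tfrac{1}{10}$. If such a residue were picked up it would contribute a term proportional to $|y|^{-\mu_k}$, which has modulus $1$, i.e.\ a contribution of size $T^{B}$ with \emph{no} positive power of $|y|$; this would destroy the bound exactly in the regime where the lemma is used (arbitrarily small $|y|$, to truncate the $D_1,D_2$-sums at $T^{B}$). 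So you must either pin down that the contour defining $K_{w_4}$ already separates the poles of $\Gamma(\tfrac12(s-\mu_k))$ from those of $\Gamma(\tfrac12(1+s-\mu_k))$, i.e.\ passes to the left of $s=\mu_1,\mu_2,\mu_3$ and to the right of $\Re s=-1$ (which is how the representation must be read for the lemma to hold), or compute the residues and show they vanish. The same issue arises for $w_6$ with the poles of $\Gamma(s_1-\mu_k)$ and $\Gamma(s_2+\mu_k)$ at $\Re s_j=0$, only some of which are cancelled by the sine factors in the numerators of $S^{\epsilon_1,\epsilon_2}$. By contrast, the denominator $\sin(\pi(s_1+s_2))$ that you single out as the main obstacle is harmless: its zeros at $s_1+s_2\in\bbZ_{\leq 0}$ are exactly cancelled by the zeros of $1/\Gamma(s_1+s_2)$, so no indentation or irrational perturbation is needed; and the locus $\nu_k\approx 0$ where $\sin(\tfrac32\pi\nu_k)$ is small contributes negligibly because $h_{T,M}$ is concentrated where $|\nu_k|\asymp T$. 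Finally, the claim that the exponential growth of the sines is ``absorbed by the matching Stirling exponential from $G$'' is true but is the actual content of the absolute convergence of the double $s$-integral, so it deserves a line of verification rather than an assertion.
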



\begin{lemma}\label{truncate 2nd}
  \begin{itemize}
    \item [(i)]    If $0<|y|\leq T^{3-\ve}$, then for any constant $B>0$, we have
                    \[
                      \Phi_{w_4}(y) \ll_{\ve,B} T^{-B}.
                    \]
    \item [(ii)]   If $T^{3-\varepsilon}<y$, then
                    \[
                      y^{k}\Phi_{w_4}^{(k)}(y)\ll_{k,\varepsilon} T^{3+2\varepsilon}M^2(T+|y|^{1/3})^{k}.
                    \]
  \end{itemize}
\end{lemma}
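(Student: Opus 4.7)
The plan is to perform a uniform asymptotic analysis of the kernel $K_{w_4}(y;\mu)$ for $\mu$ in the support of $h_{T,M}$ (concentrated near $w(\mu_0)$, $w\in\cW$, with $|\mu_0|\asymp T$ and radius $M$), and then perform the outer $\mu$-integration. My approach uses the Mellin--Barnes contour integral
\[
  K_{w_4}(y;\mu) = \frac{1}{2\pi i}\int_{(\sigma)} |y|^{-s}\tilde{G}^{\epsilon}(s,\mu)\, \dd s,
\]
together with Stirling's formula. After applying the reflection and duplication formulas, $\tilde G^{\epsilon}(s,\mu)$ can be rewritten as a product of $\Gamma(s-\mu_k)$ with trigonometric factors, whose asymptotic behavior (for $|s|+|\mu_k|\to\infty$) is of the form $A(s,\mu)\, e^{i\psi(s,\mu)}$ with phase $\psi \sim \sum_{k=1}^{3}\bigl[(s-\mu_k)\log(s-\mu_k)-(s-\mu_k)\bigr]$ up to lower-order and oscillatory corrections. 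The saddle-point equation in $s$ is then $\prod_{k=1}^{3}(s-\mu_k)\asymp|y|$, which identifies $T^3$ as the critical scale.

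For part (i), when $|y|\le T^{3-\varepsilon}$, I would verify that the saddle points in $s$ (if present at all after shifting the contour) lie at distance $\gg T^{\varepsilon/3}$ from the line $\Re s=\sigma$, so that $|y|^{-s}\tilde G^\epsilon(s,\mu)$ is already small. A cleaner alternative I would pursue is to repeatedly integrate by parts in $\mu$ against $h_{T,M}$, using $\mathscr{D}_k h_{T,M}\ll M^{-k}$ from \eqref{property1 for h}; the $\mu$-derivative of the Mellin--Barnes integrand contributes a factor $\partial_{\mu_k}\psi\sim -\log(s-\mu_k)\ll \log T$, so each iteration saves at least $(M\log T)^{-1}\asymp T^{-\theta+\ve}$. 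Taking the number of iterations sufficiently large delivers $\Phi_{w_4}(y)\ll T^{-B}$ for any fixed $B$.

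For part (ii), with $y>T^{3-\varepsilon}$, I would differentiate inside the Mellin--Barnes integral:
\[
  y^k \Phi_{w_4}^{(k)}(y) = \int_{\Re\mu=0} h_{T,M}(\mu) \int_{(\sigma)} (-s)(-s-1)\cdots(-s-k+1)\, y^{-s}\tilde{G}^{\epsilon}(s,\mu)\,\frac{\dd s}{2\pi i}\, \dd_{\spec}\mu.
\]
Stationary phase in $s$ localizes the inner integral around $|s|\asymp\max(T,|y|^{1/3})$, so the rising factorial $(-s)(-s-1)\cdots(-s-k+1)$ contributes $(T+|y|^{1/3})^k$. The remaining prefactor $T^{3+2\varepsilon}M^2$ is then a trivial bound on the $\mu$-integral: the spectral density $\spec(\mu)\asymp T^3$, the effective $\mu$-support of $h_{T,M}$ has volume $\asymp M^2$, and the amplitude of $K_{w_4}(y;\mu)$ is $O(T^\varepsilon)$ in the oscillatory regime.

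The main obstacle is making the stationary-phase analysis of the $s$-integral uniform in $\mu$ throughout the support of $h_{T,M}$, while carefully tracking the lower-order corrections to Stirling's formula. A secondary difficulty is the transition region $|y|\sim T^{3}$, where (i) and (ii) meet: one must confirm both that the $(M\log T)^{-1}$ saving argument of (i) remains valid all the way up to the threshold $T^{3-\varepsilon}$, and that the amplitude bound $O(T^\varepsilon)$ used in (ii) is already in force as soon as one crosses it. These are precisely the technical issues dealt with in detail in Blomer--Buttcane's treatment of the analogous kernels.
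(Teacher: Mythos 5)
First, a point of reference: the paper does not actually prove this lemma; it is imported verbatim from Blomer--Buttcane \cite[Lemmas 8 and 9]{blomer2015subconvexity}, and the only thing the authors check is that the proof there uses nothing about the test function beyond \eqref{property1 for h} and \eqref{property2 for h}. Your overall framework (Mellin--Barnes representation, Stirling, saddle condition $\prod_k(s-\mu_k)\asymp|y|$, hence critical scale $|y|\asymp T^3$) is the correct one and is what Blomer--Buttcane carry out.

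However, your ``cleaner alternative'' for part (i) has a genuine flaw. You claim that the $\mu_k$-derivative of the phase is $\sim -\log(s-\mu_k)\asymp\log T$, so that each integration by parts in $\mu$ saves $(M\log T)^{-1}$. Two objections. First, this saving is independent of $y$, so the same argument would give $\Phi_{w_4}(y)\ll T^{-B}$ for \emph{all} $y$, contradicting part (ii) and the fact that $\Phi_{w_4}$ must carry mass $\asymp T^3M^2$ for the trace formula to balance. Second, the phase derivative is miscomputed: since $\mu_1+\mu_2+\mu_3=0$, differentiating $\prod_k\Gamma(\tfrac12(s-\mu_k))/\Gamma(\tfrac12(1-s+\mu_k))$ in the direction of $\mu_1$ (with $\mu_2$ fixed, $\mu_3=-\mu_1-\mu_2$) yields the \emph{difference} $\log|s-\mu_3|-\log|s-\mu_1|$, which is $O(1)$ and vanishes on a large set of generic $(s,\mu)$. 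The actual mechanism is a joint non-stationary phase argument in $(\Im s,\mu)$: the $\Im s$-derivative of the phase is $\log\bigl(\prod_k|s-\mu_k|/(c|y|)\bigr)$, which is $\gg\varepsilon\log T$ when $|y|\le T^{3-\varepsilon}$ \emph{except} when $s$ lies close to some $\mu_k$; in that exceptional range $|s-\mu_k|\ll|s-\mu_j|$ for $j\ne k$, so the $\mu$-directional derivative $\log(|s-\mu_j|/|s-\mu_k|)$ is $\gg\varepsilon\log T$ there, and only then does integration by parts in $\mu$ at scale $M$ yield the power saving $\asymp(M\log T)^{-1}\ll T^{-\theta}$ per step. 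A simultaneous stationary point forces $|s-\mu_1|=|s-\mu_2|=|s-\mu_3|\asymp|y|^{1/3}\gg T$, i.e.\ $|y|\gg T^3$ --- this is precisely where the threshold $T^{3-\varepsilon}$ comes from, and it is invisible in your $\mu$-only argument. Your sketch of (ii) is acceptable in outline (the stated bound is generous), but a complete proof must justify the localization $|s|\ll T+|y|^{1/3}$ by the same $s$-integration by parts and must account for the poles of $\tilde G^{\epsilon}$ at $s=\mu_k-2n$ when contours are moved, which is exactly where the factor $P(\mu)$ and condition \eqref{eqn: zeros} are used.
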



\begin{lemma}\label{truncate 3rd}
  Let $\Upsilon:=\min\{|y_1|^{1/3}|y_2|^{1/6},|y_1|^{1/6}|y_2|^{1/3}\}$.
  If $\Upsilon\ll T^{1-\ve}$, then we have
  \[ \Phi_{w_6}(y_1,y_2) \ll T^{-B}. \]
\end{lemma}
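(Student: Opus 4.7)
The plan is to insert the Mellin--Barnes representation of $K_{w_6}^{\epsilon_1,\epsilon_2}(y;\mu)$ into $\Phi_{w_6}(y_1,y_2)$, exchange the order of integration, and then apply a stationary-phase / integration-by-parts argument to the resulting $\mu$-integral. The key observation I would exploit is that when $\Upsilon\ll T^{1-\ve}$ the combined phase of the triple integral has no stationary point in the support of $h$, so repeated integration by parts in $\mu$ yields arbitrary decay.

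Concretely, I would first substitute the definition of $K_{w_6}^{\epsilon_1,\epsilon_2}$ into \eqref{defPhi}, shift the contours $\Re(s_j)=\sigma_j$ to small positive values (avoiding the poles of $G$ and $S^{\epsilon_1,\epsilon_2}$), and interchange orders so that the $\mu$-integration is innermost. Applying Stirling's formula to the $\Gamma$-quotient in $G(s,\mu)$ in the range $|\Im s_j|\ll T^{1+\ve}$ (outside this range, the gamma factors provide exponential decay and truncate the integral), the integrand acquires an explicit leading phase
$$\Psi(\mu;s,y) := -s_1\log|4\pi^2 y_1| - s_2\log|4\pi^2 y_2| + \sum_{k=1}^{3}\bigl((s_1-\mu_k)\log(s_1-\mu_k) + (s_2+\mu_k)\log(s_2+\mu_k)\bigr),$$
plus $\mu$-independent terms and the linear-in-$\nu_k$ phases coming from the trigonometric factors in $S^{\epsilon_1,\epsilon_2}(s,\mu)$.

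Differentiating $\Psi$ in $\mu_k$ gives $\log((s_2+\mu_k)/(s_1-\mu_k)) + O(1)$; since $h_{T,M}$ restricts $|\mu_k|\asymp T$, combining stationarity in $\mu$ with the stationary conditions for the $s_j$-integrals (which enforce $\log|y_j|\asymp\log T$ with the appropriate coefficients) forces both $|y_1|^{1/3}|y_2|^{1/6}$ and $|y_1|^{1/6}|y_2|^{1/3}$ to be $\asymp T$. Under the assumption $\Upsilon\ll T^{1-\ve}$ this fails, so $|\nabla_\mu\Psi|\gg T^{\ve}$ throughout the support of $h$. Repeatedly applying the standard IBP operator $\mathscr{L}=(i|\nabla_\mu\Psi|^2)^{-1}\nabla_\mu\Psi\cdot\nabla_\mu$, and bounding derivatives of $h_{T,M}$ by $M^{-k}$ as in \eqref{property1 for h}, each application gains $T^{-\ve}$, ultimately producing $\Phi_{w_6}(y_1,y_2)\ll T^{-B}$ for any $B>0$.

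The main obstacle is the careful bookkeeping of phases across the four sign patterns $(\epsilon_1,\epsilon_2)\in\{\pm 1\}^2$: the mixed-sign factors $S^{+-}$ and $S^{-+}$ contain $\sin(\pi(s_1+s_2))^{-1}$, whose real poles must be navigated when shifting contours, and the various sines and cosines contribute phases that must be combined correctly with the Stirling phase above. A secondary technical point is a dyadic decomposition of the $s_j$-integration according to the size of $|\Im s_j|$ relative to $T$, which ensures that Stirling's approximation is valid throughout the effective support and that the IBP argument can be applied uniformly.
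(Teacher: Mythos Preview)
The paper does not supply its own proof of this lemma: it is quoted from Blomer--Buttcane \cite[Lemma~9]{blomer2015subconvexity}, with only the remark that replacing their test function by $h_{T,M}$ affects the argument solely through the properties \eqref{property1 for h} and \eqref{property2 for h}. Your outline (insert the Mellin--Barnes representation, expand via Stirling, and run a non-stationary-phase argument) is precisely the strategy used in that reference, so at the level of method there is nothing to contrast.

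There is, however, a genuine gap in the step ``Under the assumption $\Upsilon\ll T^{1-\ve}$ this fails, so $|\nabla_\mu\Psi|\gg T^{\ve}$ throughout the support of $h$.'' What the preceding sentence actually establishes is that the \emph{joint} system of stationary equations in $(\mu,s_1,s_2)$ has no solution; this gives a lower bound on the full gradient $|\nabla_{(\mu,s)}\Psi|$, not on $|\nabla_\mu\Psi|$ alone for every $s$ on the contour. Concretely, take the sign pattern $(\epsilon_1,\epsilon_2)=(+,+)$: here $S^{++}$ contributes no $s$- or $\mu$-dependent phase, and your own formula yields $\partial_{t_k}\Psi=\log|\tau_2+t_k|-\log|\tau_1-t_k|$, which at $\tau_1=\tau_2=0$ vanishes identically on the support of $h_{T,M}$, \emph{independently of} $(y_1,y_2)$. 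Hence the operator $\mathscr{L}$ you write down is singular there, and integration by parts purely in $\mu$ cannot succeed uniformly in $s$. The repair is either to carry out the non-stationary-phase argument in all of $(\mu,s_1,s_2)$ simultaneously (the exponential decay of the integrand in $\Im s_j$ handles the lack of a cutoff in $s$), or---closer to what \cite{blomer2015subconvexity} actually does---to first pin down the $s$-variables via contour shifts and a dyadic localisation in $\Im s_j$, so that on each piece the $\mu$-phase genuinely has a large gradient, and only then integrate by parts in $\mu$. Your final paragraph already gestures at this decomposition; the point is that it is not a ``secondary technical point'' but the mechanism that makes the $\mu$-integration-by-parts valid.
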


There is a slight difference that we use \eqref{property1 for h} and \eqref{property2 for h}
instead of \cite[(3.7) and (3.8)]{blomer2015subconvexity}.
This have no influence in the proof.
We define $\Phi_{2,w_4}(y)$, $\Phi_{2,w_5}(y)$, and $\Phi_{2,w_6}(y)$ as in \eqref{defPhi} by using
the test function $h_2(\mu)$.
In the proof of the above three Lemmas, the only two properties of $h_{T,M}(\mu)$ which are used is \eqref{property1 for h} and \eqref{property2 for h}.
Here, we remark that $h_2(\mu)$ also satisfies these two inequalities
by using properties of $W_{\mu,N}$ (see \S \ref{sec:2ndmoment}).
So, $\Phi_{2,w_4}(y)$, $\Phi_{2,w_5}(y)$, and $\Phi_{2,w_6}(y)$ also satisfies the corresponding bounds in the above three Lemmas.

\section{The mollified second moment}\label{sec:2ndmoment}
Let $G(s)=(\cos\f{\pi s}{A})^{-100A}$, where $A$ is a positive integer.
For $|L(\thf,\phi_j)|^2$, we will use the
following approximate functional equation.

\begin{lemma}\label{lemma:AFE2}
  Let $\phi_j$ be a $GL(3)$ Maass form with Langlands parameters $\mu_j \in \Lambda_{1/2}'$.
  We have
  \[
    |L(\thf,\phi_j)|^2 = 2 \sum_{m_1,m_2}
    \frac{\overline{A_j(m_1,m_2)}}{(m_1m_2)^{1/2}}W_{j}\left(m_1m_2\right),
  \]
  where
  \[
    W_{j}(y):= \frac{1}{2\pi i}\int_{(3)} \zeta(1+2s) (\pi^3y)^{-s} \prod_{k=1}^{3}\prod_\pm \frac{\Gamma\left(\frac{s+1/2\pm\mu_{j,k}}{2}\right)}{\Gamma\left(\frac{1/2\pm\mu_{j,k}}{2}\right)}
    G(s)\frac{\dd s}{s}.
  \]
  Moreover, we have
  \[
    y^{i}W_{j}^{(i)}(y) \ll_{i,B} \bigg(1+\frac{y}{\prod_{k=1}^{3}(1+|\mu_{j,k}|)}\bigg)^B,
  \]
  for any non-negative integer $i$, and any large positive integer $B$.
\end{lemma}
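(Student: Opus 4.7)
The plan is to apply the standard approximate functional equation to the product $L(\thf,\phi_j) L(\thf,\widetilde{\phi}_j) = |L(\thf,\phi_j)|^2$. First introduce the archimedean factor
\[
    \gamma_j(s) := \prod_{k=1}^{3}\Gamma_{\bbR}(\thf+s-\mu_{j,k})\Gamma_{\bbR}(\thf+s+\mu_{j,k}),
\]
and set $V(s) := L(\thf+s,\phi_j)\, L(\thf+s,\widetilde{\phi}_j)\,\gamma_j(s)/\gamma_j(0)$. Multiplying the functional equation \eqref{eqn:FE} for $\phi_j$ by its analogue for $\widetilde{\phi}_j$ yields $V(-s) = V(s)$; moreover $V$ is entire, since each completed $L$-function $L(s,\phi_j)\prod_k\Gamma_{\bbR}(s-\mu_{j,k})$ is entire for a cusp form.

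Next, consider $I := \frac{1}{2\pi i}\int_{(3)} V(s) G(s)\,\frac{\dd s}{s}$. Shifting the contour to $\Re(s) = -3$ crosses only the simple pole at $s = 0$, contributing the residue $V(0)G(0) = |L(\thf,\phi_j)|^2$. In the shifted integral the substitution $s \mapsto -s$, combined with $V(-s) = V(s)$ and $G(-s) = G(s)$, produces $-I$; hence $|L(\thf,\phi_j)|^2 = 2I$. On $\Re(s) = 3$ the Hecke relation gives
\[
    L(\thf+s,\phi_j) L(\thf+s,\widetilde{\phi}_j) = \zeta(1+2s)\sum_{m_1,m_2\geq 1}\frac{\overline{A_j(m_1,m_2)}}{(m_1 m_2)^{\thf+s}},
\]
so termwise integration produces the first identity. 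The passage to the stated form of $W_j(y)$ is then routine, using $\Gamma_{\bbR}(w)=\pi^{-w/2}\Gamma(w/2)$ to generate the factor $(\pi^3 y)^{-s}$.

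For the derivative bound, differentiating $W_j$ under the integral brings down a polynomial in $s$ of degree $i$ along with a factor $y^{-i}$. Stirling's formula gives
\[
    \frac{\gamma_j(s)}{\gamma_j(0)} \ll \prod_{k=1}^{3}(1+|\mu_{j,k}|)^{\Re(s)}\,(1+|s|)^{O(1)}
\]
on vertical strips, while the super-polynomial decay of $G(s)$ handles the tails. For $y \leq \prod_k(1+|\mu_{j,k}|)$ one keeps the contour at $\Re(s) = \ve$ and estimates trivially; for larger $y$ one pushes the contour to $\Re(s) = B$ for arbitrarily large $B>0$, so that $y^{-s}\gamma_j(s)/\gamma_j(0)$ yields the desired decay of order $\bigl(y/\prod_k(1+|\mu_{j,k}|)\bigr)^{-B}$.

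The main obstacle is mostly bookkeeping: the Stirling bound must be uniform over $\mu_{j,k} \in \Lambda_{1/2}'$, and no poles of $\gamma_j$ may be crossed during the contour shifts. The Kim--Sarnak bound $|\Re(\mu_{j,k})| \leq 7/64$ places any such poles in $\Re(s) \leq -\thf + 7/64$, well to the left of every contour used in the argument, so the manipulations cause no difficulty.
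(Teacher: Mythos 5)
Your argument is correct and is exactly the standard approximate-functional-equation derivation of Iwaniec--Kowalski \S 5.2, which is all the paper itself offers as proof (it simply cites that reference). One immaterial slip: $7/64$ is the Kim--Sarnak bound for GL(2), not GL(3), but your contour shifts never need more than the Jacquet--Shalika bound $\mu_j\in\Lambda_{1/2}'$ already assumed in the statement, since $V(s)$ is entire and you only shift $W_j$ rightward.
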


\begin{proof}
  See e.g. Iwaniec--Kowalski \cite[\S5.2]{iwaniec2004analytic}.
\end{proof}

Note that the sum in Proposition \ref{second moment} is essentially supported on the generic forms
which satisfy
\[
  |\mu_{j,k}|\asymp|\nu_{j,k}|\asymp T, \quad 1\leq k \leq 3.
\]
So we assume $\phi_j$ also satisfy the above relation.
By Stirling's formula, if $|s|\ll |z|^{1/2}$, then
$$
\f{\Gamma(z+s)}{\Gamma(z)}=z^s\left(1+\sum_{n=1}^N\f{P_n(s)}{z^{n}}+O\Big(\f{(1+|s|)^{2N+2}}{|z|^{N+1}}\Big)\right),
$$
for certain polynomials $P_n(s)$ of degree $2n$. Since $G(s)$ has exponential decay, we may truncate at
$|\Im s|\ll T^\varepsilon$ with only a small error in $W_{j}(y)$.
Based on the above arguments, together with $\mu_j \in \Lambda_{1/2}'$, we have
\begin{align*}
    & W_{j}(y) = \frac{1}{2\pi i} \int_{(3)} \zeta(1+2s) (\pi^3y)^{-s} \prod_{k=1}^{3} \prod_\pm \Big(\frac{1/2\pm\mu_{j,k}}{2}\Big)^{s/2} \\
    & \hskip 80pt \cdot \left(1+\sum_{n=1}^N\f{2^nP_n(s)}{(1/2\pm\mu_{j,k})^{n}}
    +O\Big(\f{(1+|s|)^{2N+2}}{|\mu_k|^{N+1}}\Big)\right)
    G(s)\frac{\dd s}{s}.
\end{align*}
On the other hand, by Hecke multiplicativity relations we have
\[
  \begin{split}
    |M_j|^2 & = \sum_{\ell_1}\sum_{\ell_2} \frac{1}{\ell_1^{1/2}\ell_2^{1/2}}
      x_{\ell_1}\overline{x_{\ell_2}} A_j(\ell_1,1)\overline{A_j(\ell_2,1)} \\
    & =\sum_{d}\sum_{\ell_1}\sum_{\ell_2} \frac{1}{d\ell_1^{1/2}\ell_2^{1/2}}
        x_{d\ell_1}\overline{x_{d\ell_2}} A_j(\ell_1,\ell_2),
  \end{split}
\]
By Lemma \ref{lemma:AFE2} and Hecke relations again, we have
\[
  \begin{split}
    & \sum_{j} \frac{h_{T,M}(\mu_j)}{\cN_j} |L(\thf,\phi_j)M_j|^2 \\
    & \hskip 30pt = 2 \sum_{j} \frac{h_{T,M}(\mu_j)}{\cN_j} \sum_{d}\sum_{\ell_1}\sum_{\ell_2} \frac{x_{d\ell_1}\overline{x_{d\ell_2}}}{d\ell_1^{1/2}\ell_2^{1/2}}
         A_j(\ell_1,\ell_2)
        \sum_{m_1,m_2}\frac{1}{m_1^{1/2}m_2^{1/2}}
        W_{j}\left(m_1m_2\right)\overline{A_j(m_1,m_2)} \\
    & \hskip 30pt = 2 \sum_{d}\frac{1}{d}\sum_{\ell_1,\ell_2}
        \frac{x_{d\ell_1}\overline{x_{d\ell_2}}}{\ell_1^{1/2}\ell_2^{1/2}}
        \sum_{m_1,m_2}\frac{1}{m_1^{1/2}m_2^{1/2}}
        \sum_{j}\frac{h_{T,M}(\mu_j) W_{j}\left(m_1m_2\right)}{\cN_j}
        A_j(\ell_1,\ell_2) \overline{A_j(m_1,m_2)}.
  \end{split}
\]
Let
$$
  W_{j,N}(y) = \frac{1}{2\pi i} \int_{(3)} \zeta(1+2s) (\pi^3y)^{-s}\prod_{k=1}^{3}\prod_\pm \left(\frac{1/2\pm\mu_{j,k}}{2}\right)^{s/2}
  \left(1+\sum_{n=1}^N\f{2^nP_n(s)}{(1/2\pm\mu_{j,k})^{n}}\right)
    G(s)\frac{\dd s}{s}.
$$
Then we can replace $W_{j}$ by $W_{j,N}$ with a negligible error if we choose $N$ to be large enough.
Now we use the Kuznetsov trace formula (see Lemma \ref{lemma: KTF}) with the test function
\begin{align}\label{test h2}
h_2(\mu)=h_{T,M}(\mu)W_{\mu,N}(m_1m_2),
\end{align}
where $W_{\mu,N}$ is defined the same as $W_{j,N}$ with $\mu$ replacing $\mu_j$.
It turns out we are led to estimate
\begin{equation}\label{use Ku 2}
  \begin{split}
      2 \sum_{d}\frac{1}{d}\sum_{\ell_1,\ell_2}
        \frac{x_{d\ell_1}\overline{x_{d\ell_2}}}{\ell_1^{1/2}\ell_2^{1/2}}
        \sum_{m_1,m_2}\frac{1}{m_1^{1/2}m_2^{1/2}}
        \Big(\Delta^{(2)}+\Sigma_4^{(2)}+\Sigma_5^{(2)}+\Sigma_6^{(2)}
        -\mathcal{E}_{\mathrm{max}}^{(2)}-\mathcal{E}_{\mathrm{min}}^{(2)}\Big),
  \end{split}
\end{equation}
where
\begin{displaymath}
    \begin{split}
      \Delta^{(2)} & := \delta_{\ell_1, m_1} \delta_{\ell_2, m_2}
            \frac{1}{192\pi^5} \int_{\Re \mu = 0} h_2(\mu) \dd_{\spec}\mu,\\
      \Sigma_{4}^{(2)}& := \sum_{\epsilon  = \pm 1} \sum_{\substack{D_2 \mid D_1\\  m_2 D_1= \ell_1 D_2^2}}
            \frac{\tilde{S}(-\epsilon \ell_2, m_2, m_1; D_2, D_1)}{D_1D_2}
            \Phi_{2,w_4}\!\left(  \frac{\epsilon m_1m_2\ell_2}{D_1 D_2} \right),  \\
      \Sigma_{5}^{(2)} & := \sum_{\epsilon  = \pm 1} \sum_{\substack{D_1 \mid D_2\\ m_1 D_2 = \ell_2 D_1^2}}
            \frac{\tilde{S}(\epsilon \ell_1, m_1, m_2; D_1, D_2) }{D_1D_2}
            \Phi_{2,w_5}\!\left( \frac{\epsilon \ell_1m_1m_2}{D_1 D_2}\right),\\
      \Sigma_6^{(2)} & := \sum_{\epsilon_1, \epsilon_2 = \pm 1} \sum_{D_1,  D_2  }
            \frac{S(\epsilon_2 \ell_2, \epsilon_1 \ell_1, m_1, m_2; D_1, D_2)}{D_1D_2}
            \Phi_{2,w_6}\!  \left( - \frac{\epsilon_2 m_1\ell_2D_2}{D_1^2},
            - \frac{\epsilon_1 m_2\ell_1D_1}{ D_2^2}\right),
    \end{split}
\end{displaymath}
with $\Phi_{2,w_4}(y)$, $\Phi_{2,w_5}(y)$, and $\Phi_{2,w_6}(y)$ defined as in \eqref{defPhi} by using
the new test function $h_2(\mu)$ given by \eqref{test h2}, respectively;
and
\begin{equation*}
    \begin{split}
       \cE_{\max}^{(2)} & :=  \sum_{g} \frac{1}{2\pi i} \int\limits_{\Re(\mu)=0}
            \frac{h_2(\mu+\mu_g,\mu-\mu_g,-2\mu)}{\cN_{\mu,g}}
             \overline{B_{\mu,g}(m_1, m_2)} B_{\mu,g}(\ell_1, \ell_2) \dd\mu, \\
       \cE_{\min}^{(2)} & := \frac{1}{24(2\pi i)^2} \iint\limits_{\Re(\mu)=0} \frac{h_2(\mu)}{\cN_{\mu}}
                        \overline{A_{\mu}(m_1, m_2)} A_{\mu}(\ell_1, \ell_2) \dd\mu_1 \dd\mu_2.
    \end{split}
\end{equation*}

\subsection{The diagonal term}

Note that we have $\ell_1=m_1$, and $\ell_2=m_2$.
Thus we infer that the diagonal term in \eqref{use Ku 2} is
\begin{align}\label{diagonal}
  \f{1}{96\pi^5}\sum_{d}\sum_{\ell_1}\sum_{\ell_2}\f{x_{d\ell_1}x_{d\ell_2}}{\ell_1\ell_2}
  \int_{\Re \mu = 0} h_{T,M}(\mu) W_{\mu,N}\left(\ell_1\ell_2\right) \dd_{\spec}\mu.
\end{align}
By the definition of $W_{\mu,N}$, we only need to deal with the leading term,
which contributes
\begin{align*}
  \f{1}{96\pi^5}\int_{\Re \mu = 0} h_{T,M}(\mu) S(\mu) \dd_{\spec}\mu,
\end{align*}
where
\begin{align*}
  S(\mu) :=\sum_{d} \f{\mu^2(d)}{d} \sum_{(\ell_1,d)=1}\sum_{(\ell_2,d)=1} \f{\mu(\ell_1)a_{d\ell_1}\mu(\ell_2)a_{d\ell_2}}{\ell_1\ell_2}
  \mathcal{W}_{\mu}(\ell_1\ell_2),
\end{align*}
with
\begin{align*}
  a_{\ell} & := \f{1}{2\pi i}\int_{(3)}\f{(L/\ell)^s}{s^2}\frac{\dd s}{\log L}
  =  \begin{cases}
        \f{\log(L/\ell)}{\log L},\ \ &\ell\leq L,
        \\\noalign{\vskip 1mm}
        0,\ \ \ &\ell>L,
     \end{cases} \\
  \mathcal{W}_{\mu}(y) & := \f{1}{2\pi i} \int_{(3)} \zeta(1+2s)\left(\f{\pi^3 y}{|\mu_1\mu_2\mu_3|}\right)^{-s}G(s)\f{\dd s}{s}.
\end{align*}
By moving the line of integration in $\mathcal{W}_{\mu}$ to left,
we pass the double pole at $s=0$. Hence, by the residue theorem, we infer that
\begin{align*}
  \mathcal{W}_{\mu}(\ell_1\ell_2)=c_1\log|\mu_1\mu_2\mu_3|+c_2\log \ell_1\ell_2+c_3+O(T^{-B}),
\end{align*}
where $c_1$, $c_2$, and $c_3$ are constants. The main contribution from $\mathcal{W}_{\mu}(\ell_1\ell_2)$
comes from the first two terms, which can be treated similarly. For convenience, we only give the details
of the first term. Note that the goal is to show
\begin{align}\label{goal}
  \sum_{d}\f{\mu^2(d)}{d} \sum_{(\ell_1,d)=1} \sum_{(\ell_2,d)=1} \f{\mu(\ell_1)a_{d\ell_1}\mu(\ell_2)a_{d\ell_2}}{\ell_1\ell_2}
  \int_{\Re \mu=0}h_{T,M}(\mu) \log|\mu_1\mu_2\mu_3|\dd_{\spec}\mu
  \ll T^{3}M^2.
\end{align}
For the $\ell_1$-sum, we have
\begin{align}\label{l1-sum}
  \sum_{(\ell_1,d)=1}\f{\mu(\ell_1)a_{d\ell_1}}{\ell_1}
  =\frac{1}{2\pi i} \int_{(2)}\prod_{p|d}\left(1-\f{1}{p^{1+s}}\right)^{-1}
  \f{(L/(d\ell_1))^s}{s^2\zeta(1+s)\log L}\dd s.
\end{align}
Recall that in the region $\Re s\geq1-c/\log(|\Im s|+3)$ (here $c$ is some positive constant),
$\zeta(s)$ is analytic except for a single pole at $s=1$, and has no zeros and satisfies
$\zeta(s)^{-1}\ll \log(|\Im s|+3)$, $\zeta'(s)/\zeta(s)\ll \log(|\Im s|+3)$
(see e.g. \cite[(3.11.7) and (3.11.8)]{titchmarsh1986theory}).
We move the line of integration in \eqref{l1-sum} to
\begin{equation}\label{defGamma}
  \mathcal{C}_{\varepsilon}
  := \big\{{\rm i}x : |x|\geq\varepsilon\big\}\cup
  \big\{\varepsilon \text{e}^{{\rm i}\vartheta} : \tfrac{\pi}{2}\leq\vartheta\leq\tfrac{3\pi}{2}\big\},
\end{equation}
and $\varepsilon$ is sufficiently small.
It follows that
\begin{align}\label{l1-moving}
  \eqref{l1-sum} = \f{1}{\log L} \prod_{p\mid d} \frac{p}{p-1}
  +\int_{\mathcal{C}_{\varepsilon}}\prod_{p|d}\left(1-\f{1}{p^{1+s}}\right)^{-1}
  \f{(L/(d\ell_1))^s}{s^2\zeta(1+s)\log L}\dd s.
\end{align}
We have the similar expression for the $\ell_2$-sum.
Inserting these into \eqref{goal}, we consider the resulting $d$-sum.
A typical term is
\begin{align}\label{typical term}
\sum_{d\leq L}\f{\mu^2(d)}{d}\prod_{p\mid d} \left(\frac{p}{p-1}\right)^2\ll \log T
\end{align}
which implies \eqref{goal} by trivial computation.
For the term involving $\log(\ell_1\ell_2)=\log \ell_1+\log \ell_2$,
we have
\begin{align*}
  \sum_{\substack{\ell_1\geq1\\(\ell_1,d)=1}}\f{\mu(\ell_1)\log \ell_1}{\ell_1^{1+s}}
  = - \bigg\{\f{1}{\zeta(1+s)}\prod_{p\mid d} \bigg(1-\frac{1}{p^{s+1}}\bigg)^{-1}\bigg\}'.
\end{align*}
A similar argument shows that its contribution to \eqref{diagonal} is $\ll T^3M^2$.

\subsection{The $w_4$ and $w_5$ terms}

We only deal with the $w_4$-term, since the $w_5$-term is very similar.
Inserting a smooth unity to $m_1$, $m_2$ sums, we are led to estimate
\begin{align}\label{w4 contribution}
\sum_{\epsilon=\pm1}\sum_{d}\frac{1}{d}\sum_{\ell_1,\ell_2}
        \frac{1}{\sqrt{\ell_1\ell_2}}
        \Sigma_4^{(2)}(\epsilon,d,\ell_1,\ell_2,M_1,M_2),
\end{align}
where $M_1,M_2\gg1$, $M_1M_2\ll T^{3+\varepsilon}$,
\begin{equation*}
  \begin{split}
    & \Sigma_4^{(2)}(\epsilon,d,\ell_1,\ell_2,M_1,M_2) \\
    & \hskip 30pt :=\sum_{m_1=1}^\infty\sum_{m_2=1}^\infty \f{W_1(\f{m_1}{M_1})W_2(\f{m_2}{M_2})}{\sqrt{m_1m_2}}
      \sum_{\substack{D,\delta\\ m_2\d=\ell_1D}}\frac{\tilde{S}(-\epsilon \ell_2, m_2, m_1; D, D\d)}{D^2\d}
            \Phi_{2,w_4}\!\left(  \frac{\epsilon m_1m_2\ell_2}{D^2\d} \right),
  \end{split}
\end{equation*}
and $W_i(x)$ ($i=1,2$) are compactly supported in $[1,2]$ and satisfy $x^jW_i^{(j)}(x)\ll 1$.
By Lemma \ref{truncate 1st}, we can truncate the $D$, $\d$ sums
at some $T^B$ for some larege $B$ at the cost of a negligible error.
Then by Lemma \ref{truncate 2nd}, we can truncate the sums, again with a negligible error, at
\begin{align}\label{eqn:condition1}
  \f{m_1m_2\ell_2}{D^2\d}\geq T^{3-\varepsilon},
\end{align}
or in other words,
$$
D^2\d\leq \f{M_1M_2L}{T^{3-\varepsilon}}\ll LT^\varepsilon.
$$
Note that we have $m_2\d=\ell_1D$ now, which implies $M_2$ is small. That is,
$$
  M_2 \leq LD/\d \leq (M_1M_2/T^{3-\varepsilon})^{1/2}L^{3/2}/\d^{3/2}
  \ll L^{3/2}T^\varepsilon/\d^{3/2}.
$$
And by \eqref{eqn:condition1} we have
$$
  M_1\gg T^{3-\varepsilon}D^2\delta/(m_2\ell_2)
  = T^{3-\varepsilon}D\delta^2/(\ell_1\ell_2)\gg T^{3-\varepsilon}/L^2.
$$
We apply the Poisson summation formula in the $m_1$-variable, getting
\begin{align*}
  & \sum_{m_1=1}^\infty \f{W_1(\f{m_1}{M_1})}{\sqrt{m_1}}\tilde{S}(-\epsilon \ell_2, m_2, m_1; D, D\d)
  \Phi_{2,w_4}\!\left(  \frac{\epsilon m_1m_2\ell_2}{D^2\d} \right)
  \\
  & \hskip 50pt =\sum_{a(\mod \d)}\tilde{S}(-\epsilon \ell_2, m_2, a; D, D\d)\sum_{m_1\equiv a(\mod \d)}
  \f{W_1(\f{m_1}{M_1})}{\sqrt{m_1}}\Phi_{2,w_4}\!\left(  \frac{\epsilon m_1m_2\ell_2}{D^2\d} \right)
  \\
  & \hskip 50pt =\f{M_1^{1/2}}{\d}\sum_{a(\mod \d)}\tilde{S}(-\epsilon \ell_2, m_2, a; D, D\d)
  \sum_{m\in\mathbb{Z}}e\left(\f{am}{\d}\right)\\
  & \hskip 150pt  \cdot
  \int_{-\infty}^\infty \f{W_1(x)}{x^{1/2}}
  \Phi_{2,w_4}\!\left(\frac{\epsilon m_2\ell_2M_1x}{D^2\d} \right)e\left(-\f{mM_1x}{\d}\right)\dd x.
\end{align*}
Integration by parts in connection with Lemma \ref{truncate 2nd} ${\rm (ii)}$ and the above bounds for $M_1$ and $M_2$
shows that the integral is negligible unless $m=0$.
When $m=0$, by opening $\tilde{S}(-\epsilon \ell_2, m_2, a; D, D\d)$ and compute the $a$ sum, we obtain
\begin{align*}
\sum_{a(\mod \d)}\tilde{S}(-\epsilon \ell_2, m_2, a; D, D\d)=\sum_{a(\mod \d)}\sum_{\substack {C_1(\mod D), C_2(\mod D\d)\\(C_1,D)=(C_2,\d)=1}}e\left(m_2\f{\bar{C_1}C_2}{D}+a\f{\bar{C_2}}{\d}-\epsilon \ell_2\f{C_1}{D}\right)
=0，
\end{align*}
unless $\d=1$.
With the help of this and $m_2=\ell_1D$, we see that the contribution from $m=0$ to
$\Sigma_4^{(2)}(\epsilon,d,\ell_1,\ell_2,M_1,M_2)$ is
\begin{align*}
M_1^{1/2}\ell_1^{-1/2}\sum_{D=1}^\infty W_2\left(\f{\ell_1D}{M_2}\right)\f{1}{D^{3/2}}\sum_{\substack{C_1(\mod D)\\(C_1,D)=1}}
e\left(-\f{\epsilon \ell_2C_1}{D}\right)
\int_{-\infty}^\infty \f{W_1(x)}{x^{1/2}}
\Phi_{2,w_4}\!\left(\frac{\epsilon \ell_1\ell_2M_1x}{D} \right)\dd x.
\end{align*}
By inserting the definitions of $\Phi_{2,w_4}(x)$ and $K_{w_4}(y;\mu)$, the above $x$-integral becomes
\begin{align}\label{inserting Phi}
  &\int_{-\infty}^\infty \f{W_1(x)}{x^{1/2}}\int_{\Re \mu=0}h_2(\mu)K_{w_4}\left(\frac{\epsilon \ell_1\ell_2M_1x}{D};\mu\right)\dd_{\spec}\mu \dd x\nonumber
\\
  &\hskip 30pt =\int_{\Re \mu=0}h_2(\mu)\int_{-i\infty}^{i\infty}\left(\int_{-\infty}^\infty x^{-s-1/2}W_1(x)\dd x\right)
\left|\frac{\ell_1\ell_2M_1}{D}\right|^{-s}\tilde{G}^\epsilon(s,\mu)\f{ds}{2\pi i}\dd_{\spec}\mu\nonumber
\\
  &\hskip 30pt =\int_{\Re \mu=0}h_2(\mu)\int_{-i\infty}^{i\infty}\left|\frac{\ell_1\ell_2M_1}{D}\right|^{-s}
\hat{W}_1\left(\f{1}{2}-s\right)\tilde{G}^\epsilon(s,\mu)\f{\dd s}{2\pi i}\dd_{\spec}\mu,
\end{align}
where the Mellin transform $\hat{W}_1$ is entire and rapidly decaying, which means that
we can restrict the $s$-integral to $|\Im s|\leq T^\varepsilon$.
Inserting the definition of $\tilde{G}^\epsilon(s,\mu)$ into \eqref{inserting Phi}, we obtain
the corresponding $\mu$-integral:
\begin{align*}
\int_{\Re \mu=0}h(\mu)\Biggl(\prod_{j=1}^3\frac{\Gamma(\frac{1}{2}(s-\mu_j))}{\Gamma(\frac{1}{2}(1-s+\mu_j))} \pm i   \prod_{j=1}^3\frac{\Gamma(\frac{1}{2}(1+s-\mu_j))}{\Gamma(\frac{1}{2}(2-s+\mu_j))} \Biggr)\dd_{\spec}\mu.
\end{align*}
We only need to consider the first part, since the second part is very similar.
For fixed $\sigma\in\mathbb{R}$, $t\in\mathbb{R}$ and $|t|\geq 1$, we have the Stirling formula
\begin{align*}
  \Gamma(\sigma+it)=\sqrt{2\pi}e^{\pi|t|/2}|t|^{\sigma-1/2}e^{it(\log|t|-1)} e^{i(\sigma-1/2)\lambda\pi/2} \left(1+O(|t|^{-1})\right),
\end{align*}
where $\lambda=1$, if $t\geq1$, and $\lambda=-1$, if $t\leq -1$.
For convenience, we denote $\mu_1=it_1$, $\mu_2=it_2$, $\mu_3=-i(t_1+t_2)=it_3$, and $\Im s=t$.
Here, in the essential integrated range, we have $|t_k|\asymp|t_k-t_k'|\asymp T$ for $1\leq k, k'\leq 3$ and $k\neq k'$,
since we are considering the generic case.
Without loss of generality, we assume $t_1>0$, $t_2>0$, and $t_3<0$, and get
\begin{equation}\label{use stirling}
  \begin{split}
     & \quad\ \prod_{j=1}^3\frac{\Gamma(\frac{1}{2}(s-\mu_j))}{\Gamma(\frac{1}{2}(1-s+\mu_j))} \\
     & = 2^{-3/2}e^{i(-3t(\log 2+1)+\pi/2)}
       \f{e^{i(t-t_1)\log(t_1-t)+i(t-t_2)\log(t_2-t)+i(t_1+t_2+t)\log(t_1+t_2+t)}}
       {(t_1-t)^{1/2}(t_2-t)^{1/2}(t_1+t_2+t)^{1/2}}
       + O(T^{-5/2}).
  \end{split}
\end{equation}
By trivial estimate, the error term from the above to \eqref{w4 contribution}
is $O(T^{2+\varepsilon}M^2L)$.

For the main term in \eqref{use stirling}, we use partial integration to prove
its contribution is small. Actually, for the $\mu_1$-integral, we denote
the phase function
$$
\phi(t_1):=(t-t_1)\log(t_1-t)+(t_1+t_2+t)\log (t_1+t_2+t).
$$
We see that
$$
\phi'(t_1)=\log\f{t_1+t_2+t}{t_1-t}\gg 1.
$$
And hence, by partial integration many times, the integral is negligible.
We finally prove that the contribution from the $w_4$-term is $O(T^{2+\varepsilon}M^2L)$.


\subsection{The $w_6$ term}

By Lemma \ref{truncate 1st}, we can truncate the $D_1$, $D_2$ sums at some $T^B$ for some large $B$ at the cost of a negligible error.
Then by Lemma \ref{truncate 3rd}, we can truncate the sum further at
\[
  \f{(m_1\ell_2)^{1/3}(m_2\ell_1)^{1/6}}{D_1^{1/2}}\geq T^{1-\varepsilon},
  \quad \textrm{and} \quad
  \f{(m_1\ell_2)^{1/6}(m_2\ell_1)^{1/3}}{D_2^{1/2}}\geq T^{1-\varepsilon},
\]
which means
$$
\f{(m_1m_2\ell_1\ell_2)^{1/2}}{(D_1D_2)^{1/2}}\geq T^{2-\varepsilon}.
$$
This gives us
$$
D_1D_2\ll \f{m_1m_2\ell_1\ell_2}{T^{4-\varepsilon}},
$$
which is impossible provided that $L\leq T^{1/2-\varepsilon}$, since the essential sums of $m_1$ and $m_2$
are truncated at $m_1m_2\leq T^{3+\varepsilon}$.
Thus, the contribution of the $w_6$ term is $O(T^{-B})$.

\subsection{The contribution from the Eisentein series}

We only treat the contribution of the maximal Eisenstein series,
since the minimal Eisenstein series can
be handled similarly and the contribution will be smaller.
We have the Weyl law on $GL_2$ (see \cite{hejhal}),
$$
\sharp\left\{g: t_g\leq T\right\}=\frac{T^2}{12}-\frac{T\log T}{2\pi}+C_0T+O\left(\frac{T}{\log T}\right),
$$
where $C_0$ is a constant.
Combining this together with definitions of $h(\mu)$ and $h_2(\mu)$, and note that
we are considering the generic case, we see that the essential region of the integration
on $\mu$ is of length $\asymp M$, and the essential number of the sum of $\mu_g$ is of
size $\asymp TM$. Hence, by the bound \eqref{eqn:Amax<<}, \eqref{N max} and the above argument,
we have
\[
  \mathcal{E}_{\mathrm{max}}^{(2)} \ll_{\varepsilon}
  TM^2  (T^3L^2)^{7/64} T^{\varepsilon}.
\]
Hence the contribution of the right hand side of \eqref{use Ku 2}
of the maximal Eisenstein series is
\[
  \ll_{\varepsilon} L T^{3/2} TM^2  (T^3L^2)^{7/64} T^{\varepsilon}
  \ll_{\varepsilon} T^{3}M^2 T^{-11/64+\varepsilon}L^{39/32}
  \ll_{\varepsilon} T^{3}M^2,
\]
provided $L\ll T^{11/78-\varepsilon}$.

\section{The mollified first moment}\label{sec:1stmoment}

Let $G(s)$ be defined as in \S\ref{sec:2ndmoment}, and $T_0=T^{1+\varepsilon}$.
Consider the integral
\begin{align*}
  \f{1}{2\pi i}\int_{(3)}G(s)L\left(s+\f{1}{2},\phi_j\right)\f{T_0^{3s}}{s}\dd s.
\end{align*}
By moving the line of integration to $\Re(s)=-3$, and using the functional equation \eqref{eqn:FE}, we get
\begin{align*}
  L\left(s+\f{1}{2},\phi_j\right)=\sum_{m\geq 1}\f{A_j(1,m)}{m^{1/2}}V(m,T_0)
  + \sum_{m\geq 1}\f{A_j(m,1)}{m^{1/2}}V_{j}(m,\mu_j),
\end{align*}
where
$$
  V(y,T_0):=\f{1}{2\pi i}\int_{(3)}G(s)\left(\f{y}{T_0^3}\right)^{-s}\f{\dd s}{s},
$$
and
$$
  V_{j}(y,\mu_j):=\int_{(3)}G(s)(yT_0^3)^{-s} \prod_{k=1}^3\f{\Gamma\left(\f{s+1/2+\mu_{j,k}}{2}\right)}{\Gamma\left(\f{-s+1/2-\mu_{j,k}}{2}\right)} \f{\dd s}{s}.
$$
To see the properties of $V$ and $V_{j}$, we use the strategy in \cite{iwaniec2004analytic}.
Obviously, we have
$V(y,T_0)=1+O\left((y/T_0^3)^B\right)$,
and
$V(y,T_0)\ll (y/T_0^3)^{-B}$,
by moving the integration line to $\Re s=-B$ and $\Re s=B$ respectively.
For $V_{j}$, we have $V_{j}(y,\mu_j)\ll (yT)^{-B}$ for any $y\geq 1$,
by using the Stirling formula and moving the integration line
to $\Re s=B/(3\varepsilon)$.
With the help of these, we infer that
\[\label{first moment}
  \begin{split}
    & \sum_{j} \frac{h_{T,M}(\mu_j)}{\cN_j} L(\thf,\phi_j) M_j \\
    & \hskip 50pt = \sum_{j} \frac{h_{T,M}(\mu_j)}{\cN_j} \sum_{\ell} \frac{x_{\ell}}{\ell^{1/2}}
         A_j(1,\ell)
        \sum_{m}\frac{1}{m^{1/2}}
        V\left(m,T_0\right)\overline{A_j(m,1)} + O(T^{-B})\\
    & \hskip 50pt =  \sum_{\ell}
        \frac{x_{\ell}}{\ell^{1/2}}
        \sum_{m}\frac{V(m,T_0)}{m^{1/2}}
        \sum_{j} \frac{h_{T,M}(\mu_j)}{\cN_j}
        A_j(1,\ell) \overline{A_j(m,1)}+O(T^{-B}).
  \end{split}
\]
Then, by the Kuznetsov trace formula, we have
\[
  \begin{split}
   & \sum_{\ell}\frac{x_{\ell}}{\ell^{1/2}}
     \sum_{m}\frac{V(m,T_0)}{m^{1/2}}
     \sum_{j} \frac{h_{T,M}(\mu_j)}{\cN_j} A_j(1,\ell) \overline{A_j(m,1)}\\
   & \hskip 50pt = \sum_{\ell}\frac{x_{\ell}}{\ell^{1/2}}
     \sum_{m}\frac{V(m,T_0)}{m^{1/2}}
     \left(\Delta^{(1)}+\Sigma_4^{(1)}+\Sigma_5^{(1)}+\Sigma_6^{(1)}
     -\mathcal{E}_{\mathrm{max}}^{(1)}-\mathcal{E}_{\mathrm{min}}^{(1)}\right),
  \end{split}
\]
where
\begin{displaymath}
    \begin{split}
      \Delta^{(1)} & := \delta_{\ell, 1} \delta_{1, m}
            \frac{1}{192\pi^5} \int_{\Re \mu = 0} h_{T,M}(\mu) \dd_{\spec}\mu,\\
      \Sigma_{4}^{(1)}& := \sum_{\epsilon  = \pm 1} \sum_{\substack{D_2 \mid D_1\\  m D_1= n_1 D_2^2}}
            \frac{\tilde{S}(-\epsilon, m, 1; D_2, D_1)}{D_1D_2}
            \Phi_{w_4}\!\left(  \frac{\epsilon m}{D_1 D_2} \right),  \\
      \Sigma_{5}^{(1)} & := \sum_{\epsilon  = \pm 1} \sum_{\substack{D_1 \mid D_2\\ D_2 = D_1^2}}
            \frac{\tilde{S}(\epsilon \ell, 1, m; D_1, D_2) }{D_1D_2}
            \Phi_{w_5}\!\left( \frac{\epsilon \ell m}{D_1 D_2}\right),\\
      \Sigma_6^{(1)} & := \sum_{\epsilon_1, \epsilon_2 = \pm 1} \sum_{D_1,  D_2  }
            \frac{S(\epsilon_2, \epsilon_1 \ell, 1, m; D_1, D_2)}{D_1D_2}
            \Phi_{w_6}\!  \left( - \frac{\epsilon_2 D_2}{D_1^2},
            - \frac{\epsilon_1 m \ell D_1}{ D_2^2}\right),
    \end{split}
\end{displaymath}
and
\begin{equation*}
    \begin{split}
       \cE_{\max}^{(1)} & :=  \sum_{g} \frac{1}{2\pi i} \int\limits_{\Re(\mu)=0}
            \frac{h_{T,M}(\mu+\mu_g,\mu-\mu_g,-2\mu)}{\cN_{\mu,g}}
             \overline{B_{\mu,g}(1, m)} B_{\mu,g}(\ell,1) \dd\mu, \\
       \cE_{\min}^{(1)} & := \frac{1}{24(2\pi i)^2} \iint\limits_{\Re(\mu)=0} \frac{h_{T,M}(\mu)}{\cN_{\mu}}
                        \overline{A_{\mu}(1, m)} A_{\mu}(\ell,1) \dd\mu_1 \dd\mu_2.
    \end{split}
\end{equation*}

\subsection{The diagonal term}

By trivial estimation, the contribution of the diagonal term is
\begin{align*}\label{diagonal first moment}
  V(1,T_0)\f{1}{192\pi^5}\int_{\Re \mu = 0} h_{T,M}(\mu) \dd_{\spec}\mu
  =\f{1}{192\pi^5}\int_{\Re \mu = 0} h_{T,M}(\mu) \dd_{\spec}\mu+O(T^{-B})\asymp T^3M^2.
\end{align*}

\subsection{The other terms}

The treatments of the other terms are actually similar and in fact easier than those
in the mollified second moment. So we omit the arguments here.


\end{document}